\newcommand{\E}{{\bf{E}}}
\newcommand{\PP}{{\bf{P}}}
\newtheorem{tm}{Theorem}
\newtheorem{lem}{Lemma}
\begin{document}

\parindent=0pt

\smallskip
\par\vskip 3.5em
\centerline{\Large \bf k-connectivity threshold  for superpositions}
\centerline{\Large \bf   of Bernoulli random graphs}

\vglue2truecm

\vglue1truecm
\centerline {Daumilas Ardickas, Mindaugas Bloznelis, Rimantas Vaicekauskas}

\bigskip

\centerline{Institute of Computer Science, Vilnius University}
\centerline{
\, \ Didlaukio 47, LT-08303 Vilnius, Lithuania} 

\vglue2truecm

\abstract{Let $G_1,\dots, G_m$ be independent identically distributed
Bernoulli random subgraphs of the complete graph ${\cal K}_n$ having
vertex sets of random sizes $X_1,\dots, X_m\in \{0,1,2,\dots\}$  and random edge densities $Q_1,\dots, Q_m\in [0,1]$. 
Assuming that each $G_i$ has a vertex of degree $1$ with positive probability, we establish the $k$-connectivity threshold as $n,m\to+\infty$ for the union 
$\cup_{i=1}^mG_i$ defined on the vertex set of ${\cal K}_n$. 

\smallskip
\par\vskip 3.5em
\section{Introduction}

Connectivity is a basic graph property. The strength of connectedness was  adressed in the fundamental paper by Menger \cite{Menger_1927} as early as 1927.
 Connectivity strenght of a random graph, where a given number of edges is inserted uniformly at random, has been studied in the seminal papers  
 by Erd\H{o}s and R\'enyi \cite{ErdosRenyi1959,ErdosRenyi1961}.
We recall that 
a graph is called $k$ vertex (edge) connected if the removal of any  $k-1$ vertices (edges) does not disconnect it.
Let $p_{n,m,k}$ denote the probability that  the Erd\H{o}s-R\'enyi random  graph on $n$ vertices with $m$ randomly inserted edges is $k$ (vertex) connected. 
It follows from the results of  \cite{ErdosRenyi1961}
 that as $m,n\to+\infty$  the probability $p_{n,m,k}$ undergoes a fast growth in the range 
$m=\frac{n}{2}\left(\ln n+(k-1)\ln\ln n +c_n\right)$.
For $c_n\to\pm\infty$ the probability $p_{n,m,k}\to0.5\pm0.5$; for $c_n\to c$ the probability
$p_{n,m,k}\to \exp\{-e^{-c}/(k-1)!\}$. 
In the literature this phenomenon is referred to as $k$-connectivity threshold.
It is important to mention that for $c_n\to-\infty$  the minimal degree of the  Erd\H os-R\'enyi random graph is at most $k-1$ with probability tending to $1$
\cite{ErdosRenyi1961} (this in turn implies 
$p_{n,m,k}=o(1)$).
In the subsequent literature  the $k$-connectivity property has been studied for various  random graph models: regular graphs \cite{Bollobas_1981}, \cite{Wormald_1981},  geometric graphs  \cite{Penrose_1999},
inhomogeneous binomial graphs  \cite{Devroye_Fraiman_2014},
\cite{Shang_2023}, random intersection graphs \cite{Zhao_yagan_Gligor_2017}, \cite{Bloznelis_Rybarczyk_2014}, see also  \cite{FriezeKaronski},
\cite{JansonLuczakRucinski2001}, \cite{Hofstad2017} and references therein.

In the present paper we establish the $k$-connectivity threshod for superpositions of Bernoulli random graphs.  We now   introduce the random graph model in detail.
Let $(X,Q), (X_1,Q_1)$,$\dots$, $(X_m,Q_m),\dots$ be a sequence of 
independent and identically distributed bivariate random variables taking 
values in $\{0,1,2\dots\}\times[0,1]$.
Given $n$ and $m$, let  
$G_1=({\cal V}_1,{\cal E}_1),\dots, G_m=({\cal V}_m,{\cal E}_m)$ 
be independent Bernoulli random subgraphs of the complete 
graph ${\cal K}_n$ having random vertex sets 
${\cal V}_i\subset {\cal V}$ and random
edge sets ${\cal E}_i$.   Here
${\cal V}=\{1,\dots, n\}=:[n]$ denotes the vertex set of ${\cal K}_n$.
% vertices having random vertex sets ${\cal V}_i$ and edge sets ${\cal E}_i$. 
 Each $G_i=({\cal V}_i,{\cal E}_i)$ is obtained by  firstly sampling 
 $(X_i,Q_i)$ and secondly by  selecting 
a 
%random 
subset of vertices ${\cal V}_i\subset {\cal V}$  of size 
$|{\cal V}_i|=\min\{X_i,n\}$ 
uniformly at random from the class of subsets of ${\cal V}$ 
of size $\min\{X_i,n\}$
and retaining edges between selected vertices independently at 
random with probability $Q_i$.  In particular, $G_i$  is a random 
graph on $\min\{X_i,n\}$ vertices, where every pair of vertices is 
linked by an edge independently at random with probability $Q_i$.
Note that given $i$ random variables $X_i$ and $Q_i$ do not need to be independent.
 We study   the union graph $G_{[n,m]}=({\cal V}, {\cal E})$ with  
the vertex set ${\cal V}=[n]$ and the edge set 
${\cal E}=\cup_{i\in[m]}{\cal E}_{i}$.  

Our motivation for studying
the random graph $G_{[n,m]}$ is based on two observations. Firstly,
$G_{[n,m]}$ is a natural generalisation of the Erd\H{o}s-R\'enyi graph
 $G[n,m]$, the random graph  on $n$ vertices  with $m$ randomly inserted edges \cite{ErdosRenyi1959}. Indeed, similarly to $G[n,m]$ our graph 
 $G_{[n,m]}$ is obtained by  inserting $m$ 
 bunches of edges ${\cal E}_1,\dots, {\cal E}_m$ (instead of  $m$ single edges), which, in addition, may overlap.
 Secondly, $G_{[n,m]}$ represents a null model of the community 
 affiliation graph of \cite{Yang_Leskovec2012,Yang_Leskovec2014}
that has attracted considerable attention in the literature.
 Community affiliation graph is a union of independent Bernoulli random graphs (communities), where the overlaps of the vertex sets of contributing communities are arranged by design.
Therefore, 
%it makes sense to  
 $G_{[n,m]}$ represents a random network of
% possibly 
overlapping communities $G_1,\dots, G_m$.
We also mention  related  random graph models of overlapping community networks  \cite{Vadon_Komjathy_Hofstad_2021} and \cite{Petti_Vempala_2022}.

In the parametric regime 
$m=\Theta(n)$ as $n,m\to+\infty$ the random graph $G_{[n,m]}$ 
admits an asymptotic degree distribution 
%(including power law) 
and
 non-vanishing global clustering coefficient
% \cite{Joona_Lasse_Mindaugas2021}, 
\cite{Lasse_Mindaugas2019}.
 The clustering property of $G_{[n,m]}$ indicates the abundance of small dense subgraphs. Asymptotic distributions of respective subgraph counts  are studied  in \cite{Joona_Lasse_Mindaugas2024}. 
 The effect of clustering on the component structure and percolation was studied  in \cite{Lasse_Mindaugas2019}. 
 Letting $m/n\to+\infty$ at the rate $m=\Theta(n\ln n)$ one can   make $G_{[n,m]}$ 
 connected with a high probability. The connectivity threshold  (under various conditions on the distribution of $(X,Q)$) was studied in \cite{Daumilas_Mindaugas2023, BergmanLeskela}, 
\cite{Dominykas_Mindaugas_Rimantas2023}, 
\cite{GodehardJaworskiRybarczyk2007}.

%In the present paper we establish the $k$-connectivity threshold for 
%$G_{[n,m]}$ and $k\ge 2$. 
%We recall that a graph is called $k$ vertex (edge) connected if removal of
% any $k-1$ vertices (edges) does not make the (remaining) graph 
% disconnected.  

Before formulating our result, we introduce some notation.
 Given integer $x\ge 0$ and number $q\in [0,1]$ we denote by $G(x,q)$  the Bernoulli random graph with the vertex set
  $[x]=\{1,\dots, x\}$ and  with the edge probability $q$ (any pair of vertices is declared adjacent independently at random with probability $q$). 
 $G(0,q)$ refers to the empty graph having no vertices. 
 We denote
\begin{displaymath}
h(x,q)=1-(1-q)^{(x-1)_{+}}
\end{displaymath}
the probability that vertex $1$ is not isolated in $G(x,q)$.
 We write for short 
 $(x)_+=\max\{x,0\}$ and assign value $1$ to the expression  $0^0$. Note that $h(1,q)=h(0,q)=0$ for any $q\in[0,1]$.
  We denote by ${\mathbb I}_{\cal A}$ the indicator function of an event (set) ${\cal A}$.  We denote by ${\bar {\cal A}}$ the event complement to ${\cal A}$.
Furthermore, we denote
\begin{align}
\nonumber
&
\alpha
=
\E(Q{\mathbb I}_{\{X\ge 2\}}),
\qquad
\kappa^*
=
\E (Xh(X,Q)),
\qquad
\tau^*
=
\E\bigl((X)_2Q(1-Q)^{X-2}\bigr),
\\
\label{lambda*}
&
\lambda^*
=
\lambda_{n,m,k}^*
=
\ln n+(k-1)\ln \frac{m}{n}-\frac{m}{n}{\kappa^*}.
\end{align}

\begin{tm}\label{Theorem_1} Let $k\ge 2$ be an integer.
Let $n\to+\infty$. 
Assume that $m=m(n)=\Theta(n\ln n)$. 
Assume that  
$\alpha>0$ and $\tau^*>0$ and
\begin{align}
\label{2024-02-23}
&
\E \left(Xh(X,Q)\ln(1+X)\right)<\infty, 
\\
\label{2024-02-23+1}
&
\E\left(X\min\{1,XQ\}\ln(1+X)\right)<\infty,
\\
&
\label{eta_j}
\E \left(X^{j}Q^{j-1}\right)<\infty,
\qquad
 \qquad 
 2\le j\le k.
 \end{align}
Then 
\begin{align}
\label{Theorem_1_1}
&
\PP\{G_{[n,m]}{\rm{\ is \ vertex}}\ k{\rm{-connected}}\}
\to 1 
\
\
\
{\rm{for}}
\
\
 \
\lambda_{n,m,k}^*\to-\infty,
\\
\label{Theorem_1_2}
&
\PP\{G_{[n,m]}{\rm{\ is \ edge}}\ k{\rm{-connected}}\}
\
\
\to 0
\
\
\
 {\rm{for}}
\
\
\
\lambda_{n,m,k}^*\to+\infty.
\end{align}
\end{tm}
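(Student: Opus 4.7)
My approach is to apply the method of moments to $Y_j$, the number of vertices of $G_{[n,m]}$ of degree exactly $j$, and then for assertion (\ref{Theorem_1_1}) to bootstrap from the resulting minimum-degree estimate to full $k$-vertex-connectivity via a union bound over potential separators. The essential ingredient is the following single-community computation. For a fixed vertex $v\in[n]$ and one community $G_i$, unfolding the definitions of $\kappa^*$ and $\tau^*$ yields
\[
\PP(G_i\text{ contributes no edges to }v)=1-\kappa^*/n+o(1/n),\qquad
\PP(G_i\text{ contributes exactly one edge to }v)=\tau^*/n+o(1/n),
\]
while the moment conditions (\ref{2024-02-23})--(\ref{eta_j}) bound the probability that $G_i$ contributes two or more edges to $v$ and control the heavy tail of $X$. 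Since the communities are independent and $m=\Theta(n\ln n)$, one obtains
\[
\E Y_j=(1+o(1))\,\frac{(\tau^*)^j}{j!}\,e^{\lambda^*_{n,m,j+1}},\qquad 0\le j\le k-1,
\]
and the identity $\lambda^*_{n,m,j+1}=\lambda^*-(k-1-j)\ln(m/n)$ together with $m/n\to\infty$ shows that all these means are controlled by $\lambda^*=\lambda^*_{n,m,k}$.

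For assertion (\ref{Theorem_1_2}), the regime $\lambda^*\to+\infty$ and the hypothesis $\tau^*>0$ give $\E Y_{k-1}\to\infty$. A second moment calculation for $Y_{k-1}$, in which the covariance of ${\mathbb I}_{\{\deg v=k-1\}}$ and ${\mathbb I}_{\{\deg w=k-1\}}$ is estimated via the same single-community estimates and the assumption $\alpha>0$ ensures that few communities simultaneously contain both $v$ and $w$, then yields $Y_{k-1}\ge 1$ with probability tending to $1$. A vertex of degree $k-1$ certifies an edge cut of size $k-1$, proving (\ref{Theorem_1_2}).

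For assertion (\ref{Theorem_1_1}) the first-moment bound gives $\PP(\min\deg<k)\le\sum_{j=0}^{k-1}\E Y_j\to 0$ when $\lambda^*\to-\infty$. It remains to show that with high probability no removal of $k-1$ vertices disconnects $G_{[n,m]}$ into components of size $\ge 2$. For a candidate separator $S$ with $|S|=k-1$ and a candidate union of components $T$ with $2\le|T|\le (n-k+1)/2$, the event forces every community $G_i$ either to avoid $T$, to avoid $[n]\setminus S\setminus T$, or to contain no edge joining ${\cal V}_i\cap T$ to ${\cal V}_i\setminus(S\cup T)$. A union bound over $(S,T)$, split according to $t=|T|$, with small $t$ handled by the $\E Y_j$ estimates and moderate $t$ handled by the rarity of communities respecting a nontrivial cut, should deliver the required $o(1)$ bound.

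The main obstacle is this last no-separator step: since $X$ may be heavy-tailed, some communities span an $\Omega(n)$ fraction of the vertices and must be treated separately from typical ones. This is exactly what the logarithmic-weight assumptions (\ref{2024-02-23}) and (\ref{2024-02-23+1}) are tuned for, and they permit a clean decomposition of the union bound into typical and atypical communities. The variance estimate for $Y_{k-1}$ needed in (\ref{Theorem_1_2}) similarly uses the conditions (\ref{eta_j}) to control vertex pairs linked through a single heavy community that contributes multiple edges.
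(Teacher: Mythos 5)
Your overall architecture (first and second moments of low-degree vertex counts for the degree part, plus a union bound excluding small separators) is the same as the paper's, and your degree computations correspond to its Lemmas \ref{approx}--\ref{concentration}, with two points needing more care: (i) an error of size $o(1/n)$ in the single-community probabilities, as you state them, becomes a factor $e^{o(\ln n)}$ after raising to the power $m=\Theta(n\ln n)$ and does not yield $\E Y_j=(1+o(1))\frac{(\tau^*)^j}{j!}e^{\lambda^*_{n,m,j+1}}$; one must keep the exact identities $\PP\{d_i(v)=0\}=1-\kappa_n/n$, $\PP\{d_i(v)=1\}=\tau_n/n$ with the truncated moments and show separately that $\frac{m}{n}(\kappa^*-\kappa_n)=o(1)$ (this is where condition (\ref{2024-02-23}) is used); and (ii) $d(v)$ is not $\sum_i d_i(v)$, since two distinct communities may cover the same edge at $v$, which the paper removes by passing to the auxiliary count $N'_{k-1}$.

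The genuine gap is in your no-separator step. The only event you charge to a candidate pair $(S,T)$ is that no community joins $T$ to $[n]\setminus(S\cup T)$; its probability is of order $e^{-t\frac{m}{n}\kappa^*}=n^{-t}(\ln n)^{-t(k-1)}e^{t\lambda^*+O(t)}$ for $t=|T|$. Multiplying by $\binom{n}{t}\le(ne/t)^t$ for the choice of $T$ and by $\binom{n}{k-1}\asymp n^{k-1}$ for the choice of $S$ leaves $n^{k-1}(\ln n)^{-t(k-1)}e^{t\lambda^*+O(t)}$, which tends to infinity for every fixed $t\ge 2$; the union bound therefore diverges for all $t\lesssim \ln n/\ln\ln n$. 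Nor are these small $t$ covered by your $\E Y_j$ estimates: a component $\{u,v\}$ of $G^{(-S)}$ only forces $d(u)\le k$ in $G$, not $d(u)\le k-1$. The missing idea is to take $S$ minimal, so that every vertex of $S$ has a neighbour inside $T$; the probability of supplying these $|S|$ extra community edges contributes an additional factor of order $\bigl(t\,\eta\, m/n^{2}\bigr)^{|S|}=O\bigl((t\ln n/n)^{|S|}\bigr)$ (this is where conditions (\ref{eta_j}) actually enter), which cancels the $n^{|S|}$ from the choice of $S$ and lets the sum over $t\ge 2$ converge. This device --- the sum over bipartite graphs $F$ linking $S$ to $T$ and over the labels of the communities covering their edges, culminating in the bound (\ref{2024-01-08+2}) --- is the heart of the paper's Lemma \ref{Lemma_1} and is absent from your plan. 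Two smaller omissions: the monotone-coupling reduction to the regime $|\lambda^*_{n,m,k}|=o(\ln\ln n)$, without which the asymptotic expansion of $m/n$ used above is unavailable, and the separate treatment of large components $t>n^{\beta}$, which uses only $\alpha>0$.
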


We remark that since the vertex $k$-connectivity implies edge 
$k$-connectivity, the dichotomy (\ref{Theorem_1_1}), 
(\ref{Theorem_1_2}) extends to either sort of $k$-connectivity  (edge and vertex connectivity).

We  comment on the conditions of Theorem \ref{Theorem_1}. 
Condition $\alpha>0$ excludes the trivial case where $G_{[n,m]}$ is empty. Indeed, 
 $\alpha=0$ implies $\PP\{G_{[n,m]}$ has no edges$\}=1$.
 Furthermore, condition $\tau^*>0$ excludes the case,
 where each  community $G_{i}$ is either empty or is a clique of (random) size of at least $3$. It allso implies that $G_i$ has a vertex of degree $1$ with positive probability.  
 The condition $\tau^*>0$ plays important role in the proof of 
 (\ref{Theorem_1_2}), where
 we show that $G_{[n,m]}$ has a vertex of degree   at most $k-1$ with high probability.
In particular, under the assumption that $\tau^*>0$ the $k$-connectivity threshold for community affiliation graph with randomly assigned community memberships follows a pattern similar  to that of the  Erd\H{o}s-R\'enyi random graph described in the seminal paper
\cite{ErdosRenyi1959}:  an obstacle to $k$-connectivity is a vertex of degree at most $k-1$.
The case where $\tau^*=0$ (i.e., the case of clique communities of sizes $\ge 3$) needs a different approach and will be considered elsewhere.
 Conditions (\ref{2024-02-23+1}),
  (\ref{eta_j}) are technical and can probably be relaxed.  
 %We note that  conditions $\E (X^2Q)<\infty$  and  $\alpha>0$  ensure that  
  
  We note that 
 for $\E (X^2Q)<\infty$   
and  $\alpha>0$
the  parameter
$\kappa^*$  (that enters $\lambda^*_{n,m,k}$) is well defined  and it is bounded away from zero. Indeed, we show in Fact 3 (Section 2 below) that 
%. This follows from the inequalities 
\begin{equation}
\label{2024-01-04}
\E((X)_2Q)
\ge
\kappa^*
\ge
\alpha.
\end{equation} 
We also discuss condition  $m=\Theta(n\ln n)$. We impose this %technical 
condition to  exclude sequences $m=m(n)$ 
satisfying  $\lambda_{n,m,k}^*\to-\infty$ and such that 
$m(n)=o(n)$
because for such sequences (\ref{Theorem_1_1})  may fail.  For example, 
for bounded $X$ and for $m=o(n)$  the number of isolated vertices is at least $n-\sum_{i=1}^m|{\cal V}_i|\ge n-\sum_{i=1}^mX_i
=n-O_P(m)=O_P(n)$.  An example of such a sequence is  $m=m(n)=\lfloor n^{\frac{k-2}{k-1}}{ \ln^{-1} n}\rfloor$. 

Finally, we note that
the validity of (\ref{Theorem_1_1}) extends 
to the case where $n\ln n=o(m)$. Similarly,  the validity of (\ref{Theorem_1_2}) extends 
to the case $m=o(n\ln n)$. 
 To see this we combine Theorem \ref{Theorem_1} with the 
 coupling argument:
given two sequences $m_{-}=m_{-}(n)$ and
$m_{+}=m_{+}(n)$ such that $m_{-}\le m_{+}$ there is a natural coupling of  $G_{[n,m_{-}]}$ and $G_{[n,m_{+}]}$ such that
 $\PP\{ G_{[n,m_{-}]}\subset G_{[n,m_{+}]}\}=1$ (we obtain $G_{[n,m_{-}]}$ from  $G_{[n,m_{+}]}$ by removing $m_{+}-m_{-}$ layers). Clearly,   $k$-connectivity of $G_{[n,m_{-}]}$
implies  $k$-connectivity of $G_{[n,m_{+}]}$. Similarly, if 
$G_{[n,m_{+}]}$ fails to be $k$-connected then so does $G_{[n,m_{-}]}$.}

\bigskip

\section{Proof of Theorem \ref{Theorem_1}}

The proof of Theorem \ref{Theorem_1} consists of two parts. We prove 
 (\ref{Theorem_1_2}) in Lemma \ref{degree} and (\ref{Theorem_1_1})
 in Lemma \ref{Lemma_1}.  
 For convenience we first formulate
  Lemmas \ref{degree} and \ref{Lemma_1}. Then  we prove Theorem \ref{Theorem_1}. Proofs of Lemmas \ref{degree} and \ref{Lemma_1}
  are postponed until after the proof of Theorem \ref{Theorem_1}.
We begin with notation and auxiliary facts.

{\it Notation.}
We write, for short, 
$G=G_{[n,m]}$.
Given $S\subset {\cal V}=[n]$, 
we denote by  $G^{(-S)}$ the subgraph
 of $G$ induced by the vertex 
 set ${\cal V}\setminus S$. 
 For integer $k\ge 2$, introduce event 
 \begin{displaymath}
 {\cal B}_k=\Bigl\{ \exists  S\subset {\cal V}:  |S|\le k-1,  
G^{(-S)}\,
{\text{has  a  component  on}}
\
r
\
{\text{vertices  for  some}}
\
2\le r\le \frac{n-|S|}{2}
\Bigr\}.
\end{displaymath}
We denote by $d(v)$ (respectively $d_i(v)$) the 
degree of $v\in {\cal V}$ in $G$ 
(respectively $G_i$). 
We put $d_i(v)=0$ for $v\notin {\cal V}_i$.
Let 
\begin{displaymath}
N_t
=
\sum_{v\in{\cal V}}{\mathbb I}_{\{d(v)=t\}}
\end{displaymath}
be the number of vertices of degree $t$ in $G$.
 We write  ${\tilde X}=\min\{X,n\}$ and
  ${\tilde X}_i=\min\{X_i,n\}$, for $1\le i\le m$, and
denote  $\eta_j={\bf{E}} (X^jQ^{j-1})$,
$j=2,3,\dots$,
\begin{align}
\nonumber
&
\kappa
=
\kappa_n
=
\E ({\tilde X}h({\tilde X},Q)),
\qquad
\tau
=
\tau_n
=
\E
\bigl(
({\tilde X})_2Q(1-Q)^{{\tilde X}-2}\bigr),
\\
&
\mu=\E \left(Xh(X,Q)\ln(1+X)\right), 
\qquad
\mu'=\E\left(X\min\{1,XQ\}\ln(1+X)\right),
\\
\label{lambda}
&
\lambda_{n,m,k}
=
\ln n+(k-1)\ln \frac{m}{n}-\frac{m}{n}{\kappa}.
\end{align}
We note that quantities $\kappa$ and $\tau$ depend on $n$ and tend to 
$\kappa^*$ and $\tau^*$  as $n\to \infty$.

For a sequence of random variables $\{\zeta_n, n\ge 1\}$ we write 
$\zeta_n=o_P(1)$ if $\PP\{|\zeta_n|>\varepsilon\}=o(1)$ as $n\to+\infty$ for any $\varepsilon>0$. We write $\zeta_n=O_P(1)$ if 
$\lim_{C\to+\infty}\sup_n\PP\{|\zeta_n|>C\}=0$.

{\bf Fact 1}. 
%Remark \label{kappa-mu}
Assume that $\mu<\infty$. Then 
$0\le \kappa^*-\kappa\le \frac{\mu}{\ln(1+n)}$. Moreover, for
$m=\Theta(n\ln n)$ we have 
$0\le \lambda_{n,m,k}-\lambda^*_{n,m,k}=o(1)$ as 
$n \to+\infty$.

{\it Proof of Fact 1.}
Denote $\mu_n={\bf{E}}\left(Xh(X,Q)\ln(1+X){\bf I}_{\{X>n\}}\right)$.
We have 
\begin{eqnarray}
\nonumber
\kappa^*-\kappa
&=&
{\bf{E}}\left(Xh(X,Q){\bf I}_{\{X>n\}}\right)
-
{\bf{E}}\left(nh(n,Q){\bf I}_{\{X>n\}}\right)
\\
\nonumber
&\le& 
{\bf{E}}\left(Xh(X,Q){\bf I}_{\{X>n\}}\right)
\le
\frac{\mu_n}{\ln(1+n)}.
\end{eqnarray}
Note that the right side of the first identity is non-negative because  $x\to h(x,q)$ is nondecreasing. Hence  $0\le \kappa^*-\kappa$.
Furthermore, the  inequality $\mu_n\le \mu$
implies $\kappa^*-\kappa\le \frac{\mu}{\ln(1+n)}$.
Moreover,  $\mu<\infty$ implies $\mu_n=o(1)$. Hence 
\begin{eqnarray}
\nonumber
\lambda_{n,m,k}-\lambda_{n,m,k}^*
&=&
\frac{m}{n}(\kappa^*-\kappa)
\le 
\frac{m}{n}
\frac{\mu_n}{\ln(1+n)}
=
o\left(\frac{m}{n}
\frac{1}{\ln(1+n)}
\right)
=o(1).
\quad
\qed
\end{eqnarray}

{\bf Fact 2}.
%Remark \label{tau-tau}
 We have
$|\tau-\tau^*|\le {\bf{E}} \left((X)_2Q{\bf I}_{\{X>n\}}\right)$. Consequently, 
$\eta_2<\infty$ implies $|\tau-\tau^*|=o(1)$
as $n\to+\infty$.

{\it Proof of Fact 2.}
We obtain $|\tau-\tau^*|\le {\bf{E}} \left((X)_2Q{\bf I}_{\{X>n\}}\right)$  from the inequalities
\begin{eqnarray}
\nonumber
\tau^*-\tau
&=&
{\bf{E}}\left((X)_2Q(1-Q)^{X-2}{\bf I}_{\{X>n\}}\right)
-
{\bf{E}}\left((n)_2Q(1-Q)^{n-2}{\bf I}_{\{X>n\}}\right)
\\
\nonumber
&\le&
{\bf{E}}\left((X)_2Q(1-Q)^{X-2}{\bf I}_{\{X>n\}}\right)
\le
{\bf{E}}\left((X)_2Q{\bf I}_{\{X>n\}}\right),
\\
\nonumber
\qquad
\tau-\tau^*
&=&
{\bf{E}}\left((n)_2Q(1-Q)^{n-2}{\bf I}_{\{X>n\}}\right)
-
{\bf{E}}\left((X)_2Q(1-Q)^{X-2}{\bf I}_{\{X>n\}}\right)
\\
\nonumber
&\le&
{\bf{E}}\left((n)_2Q(1-Q)^{n-2}{\bf I}_{\{X>n\}}\right)
\le
{\bf{E}}\left((X)_2Q{\bf I}_{\{X>n\}}\right).
\quad
\qquad
\qquad
\qed
\end{eqnarray}

{\bf Fact 3}. Assume that $\E((X)_2Q)<\infty$. Then 
(\ref{2024-01-04}) holds.

{\it Proof of Fact 3.}
 For integer $x\ge 2$ and $0\le q\le 1$ we have
\begin{displaymath}
q(x-1)
=
q\sum_{i=0}^{x-2}1
\ge 
q\sum_{i=0}^{x-2}(1-q)^i=h(x,q)\ge h(2,q)
=
q.
 \end{displaymath} 
Now inequalities $q(x-1)\ge h(x,q)\ge q$ for $x\ge 2$ imply
$(x)_2q\ge xh(x,q)\ge xq\ge q$. Consequently, we have
 \begin{displaymath}
\E ((X)_2Q{\mathbb I}_{\{X\ge 2\}})
\ge 
\E (Xh(X,Q){\mathbb I}_{\{X\ge 2\}})
 \ge 
  \E (Q{\mathbb I}_{\{X\ge 2\}}).
 \end{displaymath}
 Invoking identities
 $\E ((X)_2Q)=\E ((X)_2Q{\mathbb I}_{\{X\ge 2\}})$ and 
 $\E (Xh(X,Q))$$=$
 $\E (Xh(X,Q){\mathbb I}_{\{X\ge 2\}})$ (the latter one follows from the identities
$h(1,q)=h(0,q)=0$) we obtain
 (\ref{2024-01-04}) . 
$\qed$

\begin{lem}\label{degree}Let $k\ge 2$ be an integer. Let $n,m\to+\infty$. Assume that $m=m(n)=\Theta(n\ln n)$.
Assume that $\alpha>0$, $\tau^*>0$, $\eta_2<\infty$, and 
$\mu'<\infty$. For $k\ge 3$ we assume, in addition, that $\eta_3<\infty$. 
Then for $\lambda_{n,m,k}\to+\infty$
 we have $\PP\{ N_{k-1}\ge 1\}\to 1$.
Moreover, for $\lambda_{n,m,k}\to-\infty$
we have $N_t=o_P(1)$ for $t=0,1,\dots, k-1$.
\end{lem}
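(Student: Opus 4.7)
The plan is to apply the first- and second-moment methods to $N_{k-1}$. The central ingredient is the asymptotic of the first moment: I plan to prove that for each fixed $t \in \{0, 1, \ldots, k-1\}$,
\begin{equation*}
\E N_t = (1+o(1)) \frac{\tau^t}{t!} \, n \left(\frac{m}{n}\right)^t e^{-m\kappa/n},
\end{equation*}
equivalently $\ln \E N_t = \lambda_{n,m,t+1} + \ln(\tau^t/t!) + o(1)$. Starting from $\E N_t = n\,\PP\{d(v_1)=t\}$ by symmetry, the per-layer ingredients are $\PP\{d_i(v_1) = 0\} = 1 - \kappa/n$ and $\PP\{d_i(v_1) = 1\} = \tau/n$, while $\PP\{d_i(v_1) \ge 2\}$ is of order $\eta_3/n$ (a bound relevant for $k \ge 3$). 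I would decompose $\{d(v_1)=t\}$ according to the set $J \subseteq [m]$ of layers in which $v_1$ has a neighbor, isolating the canonical configuration in which $|J|=t$, each $i \in J$ contributes exactly one neighbor, and the $t$ contributed neighbors are pairwise distinct. This canonical configuration has probability $\binom{m}{t}(\tau/n)^t(1-\kappa/n)^{m-t}$ by independence of layers. All other configurations (a layer with $d_i(v_1) \ge 2$, or coincidences of contributed neighbors from distinct layers) are degenerate and, after summation, smaller by a factor $o(1)$ under the moment hypotheses $\eta_j < \infty$ and $\mu' < \infty$. Finally $(1-\kappa/n)^m = e^{-m\kappa/n + O(m\kappa^2/n^2)}$ with $m\kappa^2/n^2 = O(\kappa^2 \ln n/n) = o(1)$.

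For the case $\lambda_{n,m,k} \to -\infty$, the identity $\lambda_{n,m,t+1} = \lambda_{n,m,k} - (k-1-t)\ln(m/n)$ together with $\ln(m/n) \to \infty$ (guaranteed by $m = \Theta(n \ln n)$) shows that $\lambda_{n,m,t+1} \to -\infty$ for every $t \le k-1$. Hence $\E N_t \to 0$ for each such $t$, and Markov's inequality gives $N_t = o_P(1)$.

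For the case $\lambda_{n,m,k} \to +\infty$, we have $\E N_{k-1} \to \infty$, and I would conclude $\PP\{N_{k-1} \ge 1\} \to 1$ by Chebyshev's inequality, which requires $\Var N_{k-1} = o((\E N_{k-1})^2)$. Expanding $\E N_{k-1}^2 = \E N_{k-1} + n(n-1)\,\PP\{d(v_1) = d(v_2) = k-1\}$, the task reduces to verifying
\begin{equation*}
n(n-1)\,\PP\{d(v_1) = d(v_2) = k-1\} = (1+o(1)) (\E N_{k-1})^2.
\end{equation*}
This is obtained by repeating the first-moment enumeration with two focal vertices: the dominant contribution comes from $2(k-1)$ layers, each containing exactly one of $v_1, v_2$ and contributing a single distinct neighbor to it; the expected number of layers that contribute a neighbor to both $v_1$ and $v_2$ can be bounded by a quantity of order $m/n^2$ times a moment in $(\tilde X, Q)$ controlled by the hypotheses, and tends to zero, so the joint probability asymptotically factorizes.

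The main obstacle is the technical bookkeeping in the first-moment asymptotic: controlling all the degenerate configurations uniformly in the regime $m = \Theta(n \ln n)$, and verifying that the stated moment conditions $\eta_2 < \infty$, $\eta_3 < \infty$ (for $k \ge 3$) and $\mu' < \infty$ suffice to render the associated error terms lower order than the canonical $\binom{m}{t}(\tau/n)^t e^{-m\kappa/n}$.
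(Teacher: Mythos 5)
Your proposal is correct and follows essentially the same route as the paper: the first moment of the degree count is computed by isolating the canonical configuration ($t$ distinct layers, each contributing exactly one distinct neighbour, probability $\binom{m}{t}(\tau/n)^t(1-\kappa/n)^{m-t}$) and bounding the degenerate configurations (a layer of degree $\ge 2$, or coinciding neighbours across layers) as lower order, after which Markov's inequality settles the case $\lambda_{n,m,k}\to-\infty$ and a second-moment/Chebyshev argument settles $\lambda_{n,m,k}\to+\infty$. The only difference is organisational: the paper introduces the auxiliary count $N'_{k-1}$ of vertices realising the canonical configuration, proves $N_{k-1}=(1+o_P(1))N'_{k-1}$, and runs the variance computation on $N'_{k-1}$ (whose indicator decomposes into disjoint, layer-product events), which makes the factorisation of the joint probability for two focal vertices cleaner than working with $N_{k-1}$ directly as you propose.
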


\begin{lem}
\label{Lemma_1} 
Let $k\ge 2$ be an integer.
Let $n\to+\infty$. 
Assume that 
$m=m(n)\to\infty$ and $m=\Theta(n\ln n)$.
Assume that $\alpha>0$, $\mu<\infty$ and and  $\eta_j<\infty$,
 for $2\le j\le k$.
Assume that $\lambda_{n,m,k}^*\to -\infty$ and 
$|\lambda_{n,m,k}^*|=o(\ln\ln n)$.
Then
\begin{equation}
\label{component2}
\PP\{{\cal B}_k\}=o(1).
\end{equation} 
\end{lem}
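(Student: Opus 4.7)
The event ${\cal B}_k$ is covered by the existence of disjoint sets $S, T \subset {\cal V}$ with $|S|=s\in\{0,\ldots,k-1\}$ and $|T|=r\in\{2,\ldots,\lfloor(n-s)/2\rfloor\}$ such that $T$ is a union of components of $G^{(-S)}$; this forces no edge of $G$ to cross between $T$ and $R:={\cal V}\setminus(S\cup T)$. By independence of the layers $G_1,\ldots,G_m$,
\begin{align*}
\PP\{\text{no edge of $G$ crosses $(T,R)$}\} = \Phi(r,s)^m,
\qquad
\Phi(r,s) = \E\bigl[(1-Q)^{|T\cap{\cal V}_1|\,|R\cap{\cal V}_1|}\bigr],
\end{align*}
where $\Phi$ depends on $S,T$ only through $r,s$ by exchangeability. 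This yields the starting union bound
\begin{align*}
\PP\{{\cal B}_k\} \le \sum_{s=0}^{k-1}\binom{n}{s}\sum_{r=2}^{\lfloor(n-s)/2\rfloor}\binom{n-s}{r}\,\Phi(r,s)^m.
\end{align*}

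The key auxiliary estimate is $1-\Phi(r,s) \ge (r\kappa^*/n)(1-o(1))$ for $r$ not too large, derived by a first-moment/inclusion-exclusion argument starting from $1-\Phi(1,0)=\kappa/n$, with second-order corrections controlled by $\eta_2<\infty$ and the passage $\kappa\to\kappa^*$ handled by Fact~1. Combining $\Phi^m\le\exp(-m(1-\Phi))$ with the identity
\begin{align*}
e^{-m\kappa^*/n} = n^{-1}\left(\frac{n}{m}\right)^{k-1} e^{-\lambda^*_{n,m,k}},
\end{align*}
one rewrites each summand as a product of powers of $n$, of $m/n=\Theta(\ln n)$, and of $e^{-|\lambda^*|}=(\ln n)^{o(1)}$ (the last using $|\lambda^*|=o(\ln\ln n)$). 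I would then split the $r$-range into moderate $r\ge r_*$ (for a suitable fixed threshold $r_*$), where the Stirling bound $\binom{n-s}{r}\le(en/r)^r$ is dominated by the exponential decay of $\Phi^m$ and the sum converges geometrically; and small $r<r_*$, which requires more care.

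The main obstacle is the small-$r$ regime, especially $r=2$ and $s=k-1$, where the naive bound $\binom{n}{k-1}\binom{n-k+1}{2}\Phi(2,k-1)^m$ is of order $n^{k-1}(\ln n)^{-2(k-1)}$ and fails to be $o(1)$. To remedy this I would restrict to the event $\{\min_{v\in{\cal V}}\deg_G(v)\ge k\}$, which has probability $1-o(1)$ by Lemma~\ref{degree} applied under $\lambda_{n,m,k}\to-\infty$ (equivalent to $\lambda^*\to-\infty$ by Fact~1; the extra hypothesis $\tau^*>0$ needed by that lemma enters only the $\lambda\to+\infty$ direction, while under $\lambda\to-\infty$ a direct first-moment bound using $\mu<\infty$ suffices). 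Under the minimum-degree event, every $v\in T$ has all its $\ge k$ neighbors in $S\cup T$, which forces at least $k-s$ neighbors in $T\setminus\{v\}$ (in particular $r\ge k-s+1$) and, when $r-1<k$, at least $k-r+1$ specified neighbors in $S$. In the worst case $r=2, s=k-1$ this amounts to $2k-1$ specific forced edges. Quantifying the joint probability of this internal structure together with the absence of crossing edges proceeds layer-by-layer: conditionally on $(\tilde X_i,Q_i,{\cal V}_i)$, the edges of $G_i$ lying inside $T$, between $T$ and $S$, and between $T$ and $R$ are mutually independent Bernoulli$(Q_i)$, so after a union bound over assignments of the forced edges to layers one picks up an extra factor of order $(m\E[Q(\tilde X)_2]/n^2)^{N}\sim(\ln n/n)^N$ (with $N$ the number of forced edges), controlled by $\eta_j<\infty$ for $2\le j\le k$. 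This extra factor is small enough (a negative power of $n$ up to $\text{polylog}$) to defeat the combinatorial prefactor $\binom{n}{s}\binom{n-s}{r}$ and the modest growth $e^{-|\lambda^*|}=(\ln n)^{o(1)}$ permitted by the hypothesis, yielding $\PP\{{\cal B}_k\}=o(1)$ after summing over $s$ and $r$.
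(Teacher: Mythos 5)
Your skeleton --- the union bound over pairs $(S,T)$, the no-crossing probability $\Phi(r,s)^m={\hat q}_{r,s}^{\,m}$, the first-order estimate $1-{\hat q}_{r,s}\gtrsim r\kappa^*/n$, and the identification of small $r$ with $s\ge 1$ as the real obstruction --- coincides with the paper's, but your cure for the obstruction is genuinely different. The paper intersects ${\cal B}_k$ with $\{G\ {\rm connected}\}$ (imported from \cite{Daumilas_Mindaugas2023}) and uses the minimality of $S$ to force exactly $s\le k-1$ edges from $S$ into the component, a star-shaped bipartite structure whose covering by layers is bounded by $r^s(\mathrm{const}\cdot\ln n/n)^s$ via (\ref{2024-01-08+2}); you intersect with $\{\min_v d(v)\ge k\}$ and force internal edges of $S\cup T$. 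Your route avoids importing the connectivity theorem, but it must track up to order $rk/2$ forced edges spanning up to $s+r$ vertices, and there your quantitative claim is too optimistic: the factor $(m\E(Q({\tilde X})_2)/n^2)^N$ is only correct when the $N$ forced edges are covered by $N$ distinct layers. When several forced edges fall into one community the relevant probability is $\E\bigl(({\tilde X})_{v}(n)_{v}^{-1}Q^{e}\bigr)$ for a block with $e$ edges on $v$ vertices, and since $e$ can exceed $k-1$ and $v$ can exceed $k$, reducing this to the assumed moments $\eta_j$, $2\le j\le k$, needs a separate (feasible but nontrivial) argument. You would also have to re-derive the minimum-degree statement without $\tau^*>0$ and $\mu'<\infty$, as you note; a first-moment bound does suffice there.

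The genuine gap is the range of $r$ between roughly $n^{\beta}$ and $(n-s)/2$. Your key estimate $1-\Phi(r,s)\ge (r\kappa^*/n)(1-o(1))$ holds only for $r\le n^{\beta}$ with $\beta<1$: the error terms (e.g.\ $r^2/(n-s-r)^2$ in (\ref{sr_2})) become comparable to the main term as $r$ approaches $n/2$, and indeed $r\kappa^*/n$ can exceed $1$ there. Yet your splitting of the $r$-sum at a \emph{fixed} threshold $r_*$ leaves this regime covered by nothing. For $n^{\beta}<r\le (n-s)/2$ one needs the cruder bound $1-{\hat q}_{r,s}\ge \alpha r/(n-s-1)-O(1/n)$ of (\ref{sr_1}), and the cut-point must be chosen as $\beta=\max\{1-\alpha/(2\kappa^*),1/2\}$ so that the resulting exponent $-(\alpha/\kappa^*)r\ln n$ defeats $\binom{n}{r}\le e^{(1-\beta)r\ln n+2r}$. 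This is exactly where the hypothesis $\alpha>0$ enters; your proposal never uses $\alpha$, which signals the omission. A smaller but real point: for the estimate you do state to survive exponentiation by $m=\Theta(n\ln n)$, the $o(1)$ must be quantified as $O(1/\ln n)$ (this is what Fact 1 and Lemma \ref{2023-12-05Lema} deliver); a bare $o(1)$ costs a factor $e^{o(r\ln n)}$ that could swallow the whole bound.
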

Proofs of Lemmas  \ref{degree} and \ref{Lemma_1} 
are postponed until after the proof of Theorem 
\ref{Theorem_1}.

\begin{proof}[Proof of Theorem \ref{Theorem_1}]
In view of Fact 1 
%\ref{kappa-mu} 
we have 
$\lambda_{n,m,k}\to\pm\infty\Leftrightarrow \lambda_{n,m,k}^*\to\pm \infty$. 

Let us show (\ref{Theorem_1_2}). 
By Lemma \ref{degree},  with probability tending to $1$ there exists a vertex of degree $k-1$. Hence by removing  $k-1$ edges one can make $G_{[n,m]}$ disconnected.

Let us show (\ref{Theorem_1_1}).
%Now assume that $\lambda_{n,m,k}^*\to-\infty$.
We claim that it suffices to show
(\ref{Theorem_1_1}) in the case where 
$|\lambda_{n,m,k}^*|=o(\ln\ln n)$.  To see why this is true
% fix a (large) $n$ and 
consider two sequences $m'=m'(n)$ and $m''=m''(n)$ such that
$m', m''=\Theta(n\ln n)$, $\lambda_{n,m',k}^*\to -\infty$,
$\lambda_{n,m'',k}^*\to -\infty$  and 
$|\lambda_{n,m',k}|=o(\ln\ln n)$. Put $m_1=m'\wedge m''$ and $m_2=m' \vee m''$.  
Analysis of the function $m\to \lambda_{n,m,k}^*$ defined in (\ref{lambda*}) shows that (for sufficiently large $n$) we have
$m_1\le m_2
\Leftrightarrow 
|\lambda_{n,m_1,k}^*|\le|\lambda_{n,m_2,k}^*|$.
Furthermore, the coupling argument (see discussion after Theorem 
\ref{Theorem_1}) implies
\begin{displaymath}
\PP\{G_{[n,m_1]}{\rm{\ is \ vertex}}\ k{\rm{-connected}}\}
\le
\PP\{G_{[n,m_2]}{\rm{\ is \ vertex}}\ k{\rm{-connected}}\}.
\end{displaymath}
Consequently, the asymptotic connectivity (\ref{Theorem_1_1}) of 
$G_{[n,m_1]}$ implies that of $G_{[n,m_2]}$. Hence it suffices to show (\ref{Theorem_1_1}) in the case where 
$|\lambda_{n,m,k}^*|=o(\ln\ln n)$.

For $\lambda_{n,m,k}^*\to-\infty$ Lemma \ref{degree} shows 
that with 
probability tending to $1$ there is no vertex of degree less than $k$. 
Furthermore, for $\lambda_{n,m,k}^*\to-\infty$ satisfying 
$|\lambda_{n,m,k}^*|=o(\ln\ln n)$,   Lemma \ref{Lemma_1} shows that  (with probability 
tending to $1$) removal of any (non-empty) 
set $S\subset {\cal V}=[n]$ of vertices of size at most $k-1$ does 
not create a component of size 
$r\in \left[2,\frac{n-|S|}{2}\right]$ (in  a subgraph of $G_{[n,m]}$ induced by the vertex 
set ${\cal V}\setminus S$). 
An immediate consequence of these two lemmas is 
that $G_{[n,m]}$ is vertex $k$-connected with probability 
tending to $1$.
\end{proof}

\subsection{Proof of Lemma \ref{Lemma_1}} 
Before the proof of Lemma \ref{Lemma_1}
we introduce some notation and establish
 auxiliary  results stated  Lemmas \ref{sr},  \ref{2023-12-05Lema} below.

%{\color{blue}

Let $A_x\subset [n]$ be a  subset  sampled uniformly at random from the class of subsets of size $x$. 
Let $q\in [0,1]$. 
Let $G_x=(A_x,{\cal E}_x)$ be Bernoulli random graph with the vertex set $A_x$ and with egde probability $q$. That is, 
given $A_x$, every pair 
$\{u,v\}\subset A_x$ is declared adjacent at random with probability $q$ independently of the other pairs.  Here ${\cal E}_x$ denotes the collection of  pairs $\{u,v\}$  of adjacent vertices.
For $i, j \in \mathbb{N}$  we denote the set 
$(i, j]_{\mathbb N} = \{i+1,i+2,...,j\}$.
For integers $r\ge 1$ and $s\ge 0$  introduce event
\begin{displaymath}
{\cal B}_{r,s}=\Bigl\{\forall \{u,v\}\in{\cal E}_x 
\text{ we have either } \{u,v\} \cap (s,s+r]_{\mathbb N} = \emptyset 
\text { or } \{u,v\} \cap (s+r,n]_{\mathbb N} = \emptyset \Bigr\}.
\end{displaymath}
${\cal B}_{r,s}$ means that none of the edges of $G_x$ connect subsets 
$(s,s+r]_{\mathbb N}$ and $(s+r,n]_{\mathbb N}$ of  ${\cal V}=[n]$.
We denote 
%\end{document}
\begin{displaymath}
q_{r,s}(x,q)=q_{r,s,n}(x,q)=\PP\{{\cal B}_{r,s}\}
\end{displaymath}
and put
$q_{r,s}(x,q)=1$
for
$x=0,1$.
Furthermore, we denote
\begin{displaymath}
{\hat q}_{r,s}=\E q_{r,s}({\tilde X},Q).
\end{displaymath}
It is easy to see that that
   \begin{displaymath}
   {\hat q}_{r,s}
   =
   \PP\{{\tilde X}\le 1\}+\E \bigl(
   q_{r,s}({\tilde X},Q){\mathbb I}_{\{{\tilde X}\ge 2\}}
   \bigr).
\end{displaymath}

%\end{document}

\begin{lem}\label{sr}
  For $s\ge 0$, $1\le r \le (n-s)/2$, $2\le x\le n$ and $0\le q\le 1$ we have
  \begin{align}
  \label{sr_1}
  q_{r,s}(x,q) 
  & 
  \le 
  1-2q\frac{r(n-s-r)}{(n-s)(n-s-1)}+\frac{(s+1)_2}{n},
  \\
  \label{sr_2}
  q_{r,s}(x,q) 
  & 
\le 
1
-
\left(1-e^{-\frac{rx}{n-s}}
-
R_{1,s} 
\right)
h(x,q)
 +
 \frac{srx^2}{n(n-s)}h(x,q).
  \end{align}
Here 
   $R_{1,s}=\frac{r^2}{(n-s-r)^2}$.
\end{lem}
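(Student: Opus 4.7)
My plan is to handle the two bounds of the lemma separately, as they require quite different approaches.

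For \eqref{sr_1}, I first observe that $q_{r,s}(x,q)$ is non-increasing in $x$ by a standard coupling: deleting a uniformly chosen vertex from $A_{x+1}$ yields a uniformly distributed $A_x$, and any cross edge of the induced $G_x$ is also a cross edge of $G_{x+1}$, so the probability of $\mathcal{B}_{r,s}$ only decreases as $x$ grows. This reduces the claim to the base case $x=2$, where direct computation gives $q_{r,s}(2,q) = 1 - 2qr(n-s-r)/(n(n-1))$. The required inequality then becomes the purely algebraic statement
\[
2qr(n-s-r)\Bigl[\tfrac{1}{(n-s)(n-s-1)} - \tfrac{1}{n(n-1)}\Bigr] \le \tfrac{s(s+1)}{n},
\]
which I would verify using the identity $n(n-1)-(n-s)(n-s-1)=s(2n-s-1)$ together with $r(n-s-r)\le (n-s)^2/4$.

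For \eqref{sr_2}, let $\pi\colon[x]\to[n]$ be the uniform random injection with $A_x=\pi([x])$, set $v_1=\pi(1)$, and let $N_1$ be the neighborhood of vertex~$1$ in the abstract Bernoulli graph $G(x,q)$, so that $|N_1|\sim\mathrm{Bin}(x-1,q)$ is independent of $\pi$. Writing $\mathcal{F}=\{N_1\neq\emptyset\}$ (so $\PP\{\mathcal{F}\}=h(x,q)$), I will use the decomposition
\[
q_{r,s}(x,q) \le \PP\{\bar{\mathcal{F}}\} + \PP\{\mathcal{B}_{r,s},\mathcal{F}\} = (1-q)^{x-1} + \PP\{\mathcal{B}_{r,s},\mathcal{F}\},
\]
reducing the target to $\PP\{\mathcal{B}_{r,s},\mathcal{F}\}\le h(x,q)\bigl[e^{-rx/(n-s)}+R_{1,s}+srx^2/(n(n-s))\bigr]$.

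The key observation is that on the event $\mathcal{B}_{r,s}$, the set $T=\{v_1\}\cup\pi(N_1)$ must satisfy $T\subset A\cup C$ whenever $v_1\in A$ and $T\subset B\cup C$ whenever $v_1\in B$, where $C=[1,s]$; the case $v_1\in C$ affords no direct constraint from $v_1$ alone. Conditioning on $|T|=L=1+|N_1|$, the set $T$ becomes a uniform size-$L$ subset of $[n]$ with $v_1$ uniformly distributed inside $T$, which makes the containment probabilities explicitly computable in terms of $\binom{r+s}{L}/\binom{n}{L}$ and $\binom{n-r}{L}/\binom{n}{L}$. Summing over the three locations of $v_1$ and averaging over $L=K+1$ with $K\sim\mathrm{Bin}(x-1,q)$ (and extracting the $K=0$ contribution, which produces the $(1-q)^{x-1}$ term) yields an explicit upper bound for $\PP\{\mathcal{B}_{r,s},\mathcal{F}\}$: the $v_1\in A$ case is controlled by estimates of $\binom{r+s}{L}/\binom{n}{L}$ for $L\ge 2$ (producing the $R_{1,s}$ contribution), while the $v_1\in B$ case is controlled by $\binom{n-r}{L}/\binom{n}{L}\le e^{-rL/n}$ (producing the exponential factor).

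The principal obstacle is sharpening the exponent from the naive $rx/n$ that emerges from averaging $e^{-rL/n}$ into the target $rx/(n-s)$, and simultaneously absorbing the $v_1\in C$ contribution, which is of order $(s/n)h(x,q)$, into a single correction. Both discrepancies turn out to be of order $srx^2/(n(n-s))\cdot h(x,q)$ once the Taylor expansion of $e^{-rx/n}-e^{-rx/(n-s)}$ is carried out and the inequality $h(x,q)\le qx$ is invoked; this matches exactly the slack provided by the $\tfrac{srx^2}{n(n-s)}h(x,q)$ correction term in the statement.
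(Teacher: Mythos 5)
Your argument for (\ref{sr_1}) is correct and takes a genuinely different route from the paper's. The delete-a-uniform-vertex coupling shows $x\mapsto q_{r,s}(x,q)$ is non-increasing, reducing to the exactly computable case $x=2$, and the algebraic comparison does close: with $n(n-1)-(n-s)(n-s-1)=s(2n-s-1)$, $r(n-s-r)\le (n-s)^2/4$ and $n-s\ge 2$ the excess is at most $2s/n\le s(s+1)/n$. The paper instead conditions on the hypergeometric overlap $H=|A_x\cap[s]|$ and invokes the $s=0$ bound from \cite{Daumilas_Mindaugas2023}; your route is more self-contained and arguably cleaner for this part.

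For (\ref{sr_2}) there is a genuine gap, and it occurs already at your first reduction: the intermediate target $\PP\{{\cal B}_{r,s},{\cal F}\}\le h(x,q)\bigl(e^{-rx/(n-s)}+R_{1,s}+\frac{srx^2}{n(n-s)}\bigr)$ is false in general. Take $r=s=1$, $x=\lfloor\sqrt n\rfloor$, $q=n^{-2}$. Conditionally on ${\cal F}$, with probability $1-O(qx^2)=1-O(n^{-1})$ the graph $G_x$ has exactly one edge, whose endpoints form essentially a uniform pair in $[n]$, so $\PP\{{\cal B}_{r,s}\mid{\cal F}\}\ge 1-\tfrac{2r(n-s-r)}{n(n-1)}-O(n^{-1})=1-O(n^{-1})$; but the right side divided by $h(x,q)$ is $e^{-\sqrt n/(n-1)}+O(n^{-1})=1-n^{-1/2}+O(n^{-1})$, which is strictly smaller for large $n$. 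The lemma itself survives in this regime only because $\PP\{{\cal B}_{r,s},{\bar{\cal F}}\}$ is well below $\PP\{{\bar{\cal F}}\}=1-h(x,q)$ (cross edges among the vertices other than vertex $1$ also violate ${\cal B}_{r,s}$), and your opening inequality $q_{r,s}(x,q)\le\PP\{{\bar{\cal F}}\}+\PP\{{\cal B}_{r,s},{\cal F}\}$ discards exactly that slack. In short, the main term $e^{-rx/(n-s)}$ encodes the constraint from all $\approx J(x-J)$ cross pairs of $A_x$, and no bound built only from the neighbourhood of a single vertex can reach it.

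Two of the subsequent estimates would also fail even granting the reduction. Averaging $e^{-rL/n}$ over $L=K+1$ with $K\sim\Bin(x-1,q)$ gives, for small $q$, roughly $h(x,q)e^{-2r/n}$ rather than $h(x,q)e^{-rx/n}$; the discrepancy is of order $\tfrac{rx}{n}h(x,q)$, which is not $O\bigl(\tfrac{srx^2}{n(n-s)}h(x,q)\bigr)$ when $sx=o(n)$. And the $v_1\in C$ contribution $\tfrac{s}{n}h(x,q)$ is absorbed by $\tfrac{srx^2}{n(n-s)}h(x,q)$ only when $rx^2\ge n-s$, which fails for bounded $r$ and $x$; the inequality $h(x,q)\le qx$ does not change this ratio. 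The paper sidesteps all of this by conditioning on $H=|A_x\cap[s]|$ and applying the already-established $s=0$ estimate (\ref{sr_2+}) to the induced subgraph on $(s,n]_{\mathbb N}$, so the hard main term is inherited rather than re-derived; a self-contained proof would have to work with the full cross-pair count (e.g.\ $\E(1-q)^{J(x-J)}$ with $J$ hypergeometric), not a single vertex's neighbourhood.
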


\begin{proof}[Proof of Lemma \ref{sr}]
For $s=0$ inequalities 
\begin{align}
  \label{sr_1+}
  q_{r,0}(x,q) 
  & 
  \le 
  1-2q\frac{r(n-r)}{n(n-1)},
  \\
  \label{sr_2+}
  q_{r,0}(x,q) 
  & 
\le 
1
-
\left(1-e^{-\frac{rx}{n}}
-
\frac{r^2}{(n-r)^2}
\right)
h(x,q)
  \end{align}
  have been shown in Lemma 1 of \cite{Daumilas_Mindaugas2023}. 
  Here we show
  (\ref{sr_1}), (\ref{sr_2}) for $s\ge 1$.

Introduce hypergeometric random variable $H=|A_x\cap [s]|$ ($=$ the number of elements of $A_x$ that belong to $[s]=[1,\dots, s]$). 
In the proof we use inequalities (see, e.g. Lemma 6 in \cite{Bloznelis2013AAP})
\begin{align}
\label{2023-12-04}
\PP\{H\ge t\}\le (x)_t(s)_t/((n)_t t!), 
\qquad
t=1,2,\dots
\end{align}
and the identity (which follows by the total probability formula)
\begin{align}
\label{sr_p00}
\PP\{{\cal B}_{r,s}\}
=
\sum_{i=0}^{x\wedge s}
\PP\{{\cal B}_{r,s}|H=i\}
\PP\{H=i\}.
\end{align}

Proof of (\ref{sr_1}).
For $x-i\ge 2$  inequality (\ref{sr_1+})
%(6) of Lemma 1 of \cite{Daumilas_Mindaugas2023} 
implies
\begin{align}
\label{2025-03-19}
\PP\{{\cal B}_{r,s}|H=i\}
\le
1 
-2q\frac{r(n-s-r)}{(n-s)(n-s-1)}.
\end{align}
Here the right side upper bounds the probability that the subgraph of $G_x$ induced by the vertex set $A_x\cap (s,n]_{\mathbb N}$ has no edge connecting sets  $(s,s+r]_{\mathbb N}$ and $(s+r,n]_{\mathbb N}$.
Combining this inequality with (\ref{2023-12-04}), (\ref{sr_p00}) we obtain
\begin{align}
\nonumber
\PP\{{\cal B}_{r,s}\}
&
\le \sum_{0\le i\le x-2}
\PP\{{\cal B}_{r,s}|H=i\}
\PP\{H=i\}
+
\PP\{H\ge x-1\}
\\
\label{2024-01-04+1}
&
\le
\PP\{H\le x-2\}
\left(1 -2q\frac{r(n-s-r)}{(n-s)(n-s-1)}\right)
+
\frac{(s)_{x-1}(x)_{x-1}}{(n)_{x-1}(x-1)!}.
\end{align}
We note that the last term vanishes for $s<x-1$. For $s\ge x-1$ the last term
\begin{displaymath}
\frac{(s)_{x-1}(x)_{x-1}}{(n)_{x-1}(x-1)!}
=
\frac{(s)_{x-1}x}{(n)_{x-1}}\le \frac{sx}{n}
\le 
\frac{s(s+1)}{n}.
\end{displaymath}
Invoking this bound in (\ref{2024-01-04+1}) and 
using $\PP\{H\le x-2\}\le1$ we obtain (\ref{sr_1}).

Proof of (\ref{sr_2}).
  We write (\ref{sr_p00}) in the form,
\begin{align}
\label{sr_p0}
\PP\{{\cal B}_{r,s}\}
=
1+\sum_{i=0}^{x\wedge s} 
(\PP\{{\cal B}_{r,s}|H=i\}-1)\PP\{H=i\}
\end{align}
 and proceed similarly as in (\ref{2025-03-19}). For $x-i\ge 2$ we apply (\ref{sr_2+}) to upper bound the probabilities 
\begin{align}
\nonumber
\PP\{{\cal B}_{r,s}|H=i\}
\le
1 
-\left(1-e^{-\frac{r}{n-s}(x-i)}-R_{1,s}\right)h(x-i,q).
\end{align}
Note that this inequality holds for $x-i\le 1$ as well, because 
we have $\PP\{{\cal B}_{r,s}|H=i\}
\le
1$ and 
$h(1,q)=h(0,q)=0$.
Next, denoting
$\xi_i=(1-e^{-\frac{r}{n-s}(x-i)})h(x-i,q)$ and using  
$0\le h(x-i,q)\le h(x,q)$ we obtain
\begin{align}
\label{sr_p2}
\PP\{{\cal B}_{r,s}|H=i\}
\le 1-\xi_i+ R_{1,s}h(x,q).
\end{align}
Furthermore, invoking (\ref{sr_p2}) in (\ref{sr_p0}) 
we obtain
\begin{align}
\label{2023-12-02+10}
\PP\{{\cal B}_{r,s}\}
\le 
1-\sum_{i=0}^{x\wedge s} 
\xi_i\PP\{H=i\}
+
R_{1,s}h(x,q)
\le 1-\xi_0\PP\{H=0\}
+R_{1,s}h(x,q).
\end{align}
In the last inequality we used  $\xi_i\ge 0$. 
Next, we lower bound $\PP\{H=0\}$ using  (\ref{2023-12-04}),
\begin{displaymath}
\PP\{H=0\}=1-\PP\{H\ge 1\}\ge 1-\frac{sx}{n}.
\end{displaymath}
Invoking this inequality in  (\ref{2023-12-02+10})
we obtain
(\ref {sr_2}):
\begin{align*}
\PP\{{\cal B}_{r,s}\}
&
\le 
 1-\xi_0+\xi_0\frac{sx}{n}+R_{1,s}h(x,q)
 \\
 &
 \le
 1-\xi_0+\frac{rsx^2}{n(n-s)}h(x,q)+R_{1,s}h(x,q).
\end{align*}
In the last step we applied  inequality
$1-e^{-a}\le a$ to $a=\frac{r}{n-s}x$ and
estimated 
\begin{displaymath}
\xi_0= \left(1-e^{-\frac{r}{n-s}x}\right)h(x,q)
\le \frac{rx}{n-s}h(x,q).
\end{displaymath}
\end{proof}
 
 %} 

\begin{lem}\label{2023-12-05Lema} 
Let $s\ge 1$ be an integer.
Assume that $\mu<\infty$.
 %and denote  
%$\mu=\E\bigl(Xh(X,Q)\ln(1+X)\bigr)$.
For any $0<\beta<1$ there exists $n_*>0$ depending on $s$,
$\beta$ and  the probability distribution of 
$(X,Q)$ such that for 
$n>n_*$ we have  for  each $2\le r\le n^{\beta}$
\begin{align}
\label{2023-12-05+11}
{\hat q}_{r,s}
&
\le 
1-\frac{r}{n-s}\kappa^*+(2+\mu (s+1))\frac{r}{(n-s)\ln n}.
\end{align}
\end{lem}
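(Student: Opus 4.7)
The plan is to invoke bound (\ref{sr_2}) of Lemma \ref{sr} after truncating $\tilde X$ at a level $T=T(r,n)$, and then to use the integrability hypothesis $\mu<\infty$ (through Fact 1 and a Markov-type tail bound on $\ln(1+X)$) to replace the truncated moment by $\kappa^*$. Concretely, I fix $T=\lfloor(n-s)/(r\ln n)\rfloor$, which makes $r\tilde X/(n-s)\leq 1/\ln n$ on $\{\tilde X\leq T\}$ (so the Taylor inequality $1-e^{-a}\geq a(1-a/2)$ is accurate there) while $\ln(1+T)\asymp\ln n$; indeed, for $r\leq n^\beta$ with $\beta<1$ one verifies $\ln(1+T)\geq (1-\beta)\ln n\,(1+o(1))$ once $n$ is large.

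I then split the expectation according to whether $\tilde X\leq 1$, $2\leq\tilde X\leq T$, or $\tilde X>T$. Since $q_{r,s}(\tilde X,Q)=1$ on the first set and $q_{r,s}\leq 1$ always, one obtains
\[
\hat q_{r,s}\leq 1-\PP\{2\leq\tilde X\leq T\}+\E\bigl[q_{r,s}(\tilde X,Q)\,\mathbb{I}_{\{2\leq\tilde X\leq T\}}\bigr].
\]
To the last expectation I apply (\ref{sr_2}) combined with $1-e^{-a}\geq a(1-a/2)$, using $\tilde X\leq T$ to replace $\tilde X^2$ by $T\tilde X$ in the trailing term of (\ref{sr_2}). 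After taking expectation, this yields the main contribution $-\frac{r}{n-s}\E[\tilde X h(\tilde X,Q)\mathbb{I}_{\{2\leq\tilde X\leq T\}}]$ together with three explicit errors: a Taylor remainder of size $\frac{r^2T}{2(n-s)^2}\kappa$, the contribution $R_{1,s}=r^2/(n-s-r)^2$, and $\frac{srT}{n(n-s)}\kappa$. With the chosen $T$, each is $O(r/((n-s)\ln n))$.

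To upgrade $\E[\tilde X h(\tilde X,Q)\mathbb{I}_{\{\tilde X\leq T\}}]$ to $\kappa^*$ I invoke Fact 1 (giving $\kappa^*-\kappa\leq \mu/\ln(1+n)$) together with the Markov-type bound
\[
\E\bigl[\tilde X h(\tilde X,Q)\mathbb{I}_{\{\tilde X>T\}}\bigr]\leq\frac{\E[X h(X,Q)\ln(1+X)]}{\ln(1+T)}=\frac{\mu}{\ln(1+T)},
\]
which relies on $T<n$ (so $\mathbb{I}_{\tilde X>T}=\mathbb{I}_{X>T}$) and on monotonicity of $x\mapsto xh(x,q)$. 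Substituting back produces the main term $-\frac{r}{n-s}\kappa^*$ and the extra errors $\frac{r\mu}{(n-s)\ln(1+n)}$ and $\frac{r\mu}{(n-s)\ln(1+T)}$, both of order $r/((n-s)\ln n)$. Collecting every contribution yields the desired inequality with coefficient $2+\mu(s+1)$, for all $n$ exceeding a threshold $n_*$ depending on $s,\beta$ and the law of $(X,Q)$.

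The main obstacle is the bookkeeping that forces the total error to fit under the explicit coefficient $2+\mu(s+1)$ uniformly in $2\leq r\leq n^\beta$. The most delicate term is the Markov tail $\mu/\ln(1+T)$: in the worst case $r\approx n^\beta$ one has $\ln(1+T)\asymp(1-\beta)\ln n$, so the resulting dependence on $\beta$ must be absorbed into the choice of $n_*$, while the contributions from the Taylor remainder, from $R_{1,s}$ (using $r\leq n^\beta$ with $\beta<1$ to ensure $R_{1,s}=o(r/((n-s)\ln n))$), from Fact 1, and from the $\frac{srT}{n(n-s)}\kappa$ term (which produces the factor proportional to $s$) all need to be gathered carefully so that the leading constants sum to at most $2+\mu(s+1)$.
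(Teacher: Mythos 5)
Your route is in substance the paper's: both start from bound (\ref{sr_2}), isolate the linear term $\frac{r}{n-s}\tilde X h(\tilde X,Q)$ whose expectation is $\frac{r}{n-s}\kappa$, upgrade $\kappa$ to $\kappa^*$ via Fact 1, and push every remaining piece below a fixed multiple of $\frac{r}{(n-s)\ln n}$. The only organizational difference is that the paper keeps the exact remainder $R_{2,s}(r,x)=e^{-rx/(n-s)}-1+\frac{rx}{n-s}$ and bounds $\E\bigl(R_{2,s}(r,\tilde X)h(\tilde X,Q)\bigr)\le\frac{r}{n\ln n}$ by citing formula (26) of \cite{Daumilas_Mindaugas2023}, whereas you truncate at $T=\lfloor (n-s)/(r\ln n)\rfloor$ and Taylor-expand only on $\{\tilde X\le T\}$; the cited argument (sketched in the paper's appendix) is the same truncation in different clothing, so nothing essential is gained or lost there.

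There is, however, one concrete flaw, located exactly at the step you flag as delicate. Bounding the discarded tail by $\E\bigl[\tilde Xh(\tilde X,Q)\mathbb{I}_{\{\tilde X>T\}}\bigr]\le\mu/\ln(1+T)$ with the \emph{full} moment $\mu$ produces, after multiplying by $\frac{r}{n-s}$ and using $\ln(1+T)\ge(1-\beta)\ln n\,(1+o(1))$, a contribution of order $\frac{\mu}{1-\beta}\cdot\frac{r}{(n-s)\ln n}$. This is of exactly the same order as the total permitted error, so it cannot be ``absorbed into the choice of $n_*$'': the ratio of this term to the budget does not shrink as $n$ grows. Adding your other contributions (at most $\mu$ from Fact 1, at most $\kappa^*/2\le\mu/2$ from the Taylor remainder, $1$ from $R_{1,s}$, at most $\mu s/2$ from the trailing term) the ledger reads $1+\mu\frac{s+3}{2}+\frac{\mu}{1-\beta}$, which exceeds $2+\mu(s+1)$ already for $s=1$ whenever $\mu>1-\beta$. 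The repair is immediate and must be made explicit: the Markov step actually yields the tail moment $\mu_T:=\E\bigl[Xh(X,Q)\ln(1+X)\mathbb{I}_{\{X>T\}}\bigr]$ in the numerator, not $\mu$; since $T\ge n^{1-\beta}/\ln n\to\infty$ uniformly over $2\le r\le n^\beta$ and $\mu<\infty$, dominated convergence gives $\mu_T=o(1)$, the offending term becomes $o(1)\cdot\frac{r}{(n-s)\ln n}$, and the remaining constants sum to $1+\mu\frac{s+3}{2}\le 2+\mu(s+1)$ for every $s\ge1$.
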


\begin{proof}
%{\color{blue}
Proof of (\ref{2023-12-05+11}).
Fix $0<\beta<1$. 
 We write (\ref{sr_2}) in the form
\begin{align}
\label{2023-12-05+12}
 q_{r,s}(x,q) 
\le 
1
-
\left(\frac{rx}{n-s}-R_{1,s}-R_{2,s}(r,x) 
\right)
h(x,q)
 +
 \frac{srx^2}{n(n-s)}h(x,q),
  \end{align}
where 
   $R_{2,s}(r,x)=e^{-\frac{rx}{n-s}}-1+\frac{rx}{n-s}$. 
Then we plug 
   $({\tilde X},Q)$ 
   in (\ref{2023-12-05+12}) (in the place of $(x,q)$) and  take expected values of both sides.
We evaluate the expection of each term 
%   In doing so we treat every term 
(on the right of (\ref{2023-12-05+12})) separately.

We start with $\E\left(\frac{r}{n-s}{\tilde X}h({\tilde X},Q)\right)$. We have 
\begin{equation}
\label{2024-01-04+20}
\E\left(\frac{r}{n-s}{\tilde X}h({\tilde X},Q)\right)
=
\frac{r}{n-s}\kappa_n
\ge
\frac{r}{n-s}\left(\kappa^*-\frac{\mu}{\ln (1+n)}\right)
.
\end{equation}
In the last step we invoked inequality 
$\kappa_n\ge \kappa^*-\mu\ln^{-1}(1+n)$
of Fact 1. 
%\ref{kappa-mu}.
% , which follows by the chain of inequalities 
%\begin{align*}
%0\le \varkappa^*-\varkappa_n
%&
%\le
%\E\left(Xh(X,Q){\mathbb I}_{\{X>n\}}\right)
%\\
%&
%\le
%\frac{1}{\ln(1+n)}
%\E\left(Xh(X,Q)\ln(1+X){\mathbb I}_{\{X>n\}}\right)
%\le 
%\frac{\mu}{\ln(1+n)}.
%\end{align*}
%Here the  very first inequality follows from the fact that $x\to h(x,q)$ is nondecreasing.

Next, we note that  $h(x,q)\le 1$ implies 
$\E(R_{1,s}h({\tilde X},Q))
\le 
R_{1,s}$. For $r\le n^{\beta}$ we obtain 
\begin{equation}
\label{2024-01-04+21}
\E(R_{1,s}h({\tilde X},Q))
\le 
R_{1,s}
\le 
\frac{r n^{\beta}}{(n-n^{\beta}-s)^2}
\le \frac{r}{n\ln n},
\end{equation}
where the last inequality holds  for sufficiently large $n$.

Furthermore, we claim that there exists $n_0>0$ (depending on $s$, $\beta$ and the distribution of $(X,Q)$) such that  for each $r \le n^{\beta}$ we have
 \begin{align}
 \label{2023-12-05+10}
 \E 
 \bigl(R_{2,s}(r, {\tilde X})h({\tilde X},Q)
 \bigr)
 \le 
 \frac{r}{n\ln n}.
\end{align}  
This inequality is shown in  formula ({\color{red}26}) of  \cite{Daumilas_Mindaugas2023} for $s=0$. The same proof yields 
(\ref{2023-12-05+10}) for arbitrary, but fixed $s$.

Finally, we upper bound the expected value of the last term on the right of  (\ref{2023-12-05+12}).
We split 
\begin{displaymath}
I:=\E \bigl({\tilde X}^2h({\tilde X},Q)\bigr)
=
 \E \bigl({\tilde X}^2h({\tilde X},Q){\mathbb I}_{\{X\le n\}}\bigr) 
 +
  \E \bigl({\tilde X}^2h({\tilde X},Q){\mathbb I}_{\{X> n\}}\bigr) 
  =:I_1+I_2.
  \end{displaymath}
 Combining  identity $h(x,q)=0$, for $x\in\{0,1\}$, 
 with  inequality
 \begin{displaymath}
 x^2=x\ln(1+x)\frac{x}{\ln(1+x)}\le x\ln (1+x)\frac{n}{\ln (1+n)},
 \end{displaymath}
 which holds for $2\le x\le n$, we  upper bound 
 \begin{align*}
 I_1
 =
\E\bigl(X^2h(X,Q){\mathbb I}_{\{2\le  X\le n\}}\bigr)
\le
\frac{n}{\ln(1+n)}\E (X(\ln(1+X)) h(X,Q){\mathbb I}_{\{2\le  X\le n\}}\bigr).
\end{align*}
Next, using the fact that $x\to h(x,q)$ is nondecreasing and $n\ln (1+n)\le x\ln (1+x)$ for $x>n$ we upper bound 
\begin{align*}
I_2=
\E\bigl(n^2h(n,Q){\mathbb I}_{\{X> n\}}\bigr)
\le 
\frac{n}{\ln(1+n)}\E (Xh(X,Q)\ln(1+X){\mathbb I}_{\{X> n\}}\bigr).
\end{align*}
We conclude that $I=I_1+I_2\le
\frac{n}{\ln(1+n)}
\mu
$.
Hence
\begin{equation}
\label{2024-01-04+25}
\E\left( \frac{sr{\tilde X}^2}{n(n-s)}h({\tilde X},Q)
\right)
\le
\frac{\mu s r}{(n-s)\ln n}. 
\end{equation}

Combining  (\ref{2023-12-05+12}) with  (\ref{2024-01-04+20}),
(\ref{2024-01-04+21}), (\ref{2023-12-05+10}), (\ref{2024-01-04+25})  we obtain  (\ref{2023-12-05+11}).

%}
 \end{proof}

Now we are ready to prove Lemma \ref{Lemma_1}.

 \begin{proof}[Proof of Lemma \ref{Lemma_1}]
%\end{proof}  
We first derive a convenient asymptotic formula for $m/n$.
Using  
$|\lambda_{n,m,k}|=o(\ln\ln n)$ and  $m=O(n\ln n)$ we obtain by iterating   (\ref{lambda*}) that 
 \begin{align}
 \nonumber
 \frac{m}{n}
 &
 =
\frac{1}{\kappa^*}
\left(
\ln n
+
(k-1)
\ln
\left(
\frac{1}{\kappa^*}
\left(\ln n+(k-1)\ln\frac{m}{n}
-\lambda_{n,m,k}^*
\right)
\right)
-
\lambda_{n,m,k}^*
\right)
\\
\label{2023-12-08+3}
&
=
\frac{1}{\kappa^*}
\left(\ln n+(k-1)\ln\ln n
-(k-1)\ln \kappa^*
-
\lambda_{n,m,k}^*
+ 
O\left(\frac{\ln\ln n}{\ln n}\right)
\right).
\end{align}

Next we observe
 that for any integer $t$  and all sufficiently large $n$ there exists a constant $C_t$ (independent on $r$, $n$ and $m$) such that for each $0\le h\le t$ we have
\begin{align}
\label{2024-01-10}
{\hat q}_{r,s}^{m-h}
\le 
{\hat q}_{r,s}^{m-t}
\le
e^{-r\bigl(\ln n+(k-1)\ln\ln n-\lambda_{n,m,k}^*+ C_t\bigr)},
\qquad
2\le r\le n^{\beta}.
\end{align}
The first inequality is obvious as ${\hat q}_{r,s}\le 1$. Let us show the second inequality. For $t=0$
 and
 $2\le r\le n^{\beta}$ we obtain from
(\ref{2023-12-05+11}) using $1+a\le e^a$ that
\begin{align*}
{\hat q}_{r,s}^{m}
\le
e^{
-m\left(
\frac{r}{n}\kappa^*
-
(2+\mu (s+1))
\frac{r}{(n-s)\ln n}\right)}.
\end{align*}
Then we invoke (\ref{2023-12-08+3}) and write the argument of the exponent in the form 
\begin{equation}
\label{2024-01-10+1}
-r\bigl(\ln n+(k-1)\ln\ln n-\lambda_{n,m,k}^*+ O(1)\bigr).
\end{equation}
In this way we obtain (\ref{2024-01-10}) for $t=0$. Now we show
 (\ref{2024-01-10})  for arbitrary but fixed $t$. We have
 \begin{align*}
{\hat q}_{r,s}^{m-t}
=
{\hat q}_{r,s}^{m(1-tm^{-1})}
\le
e^{-r\bigl(\ln n+(k-1)\ln\ln n-\lambda_{n,m,k}^*+ C_0\bigr)(1-tm^{-1})}.
\end{align*}
A  calculation shows that the argument of the exponent satisfies (\ref{2024-01-10+1}). Hence  (\ref{2024-01-10}) holds.

Let us prove (\ref{component2}). We use the fact (shown in \cite{Daumilas_Mindaugas2023}) that for $\lambda_{n,m,1}\to-\infty$ the probability that  $G_{[n,m]}$ is connected tends to $1$. In view of the  inequality 
$\lambda_{n,m,1}<\lambda_{n,m,k}$ our condition 
$\lambda_{n,m,k}\to-\infty$ implies that $G_{[n,m]}$ is connected with probability $1-o(1)$. In particular, to prove  (\ref{component2}) it suffices to show that $\PP\{{\cal B}'_k\}=o(1)$, where 
${\cal B}'_k={\cal B}_k\cap\{G_{[n,m]}$ is connected$\}$.

Assume that ${\cal B}'_k$ occurs. Then there exists a pair of subsets
$(S,A_r)$ such that  $S\subset {\cal V}$ is of size $s:=|S|\in [k-1]$,  
$A_r\subset {\cal V}\setminus S$ 
is of size $r:=|A_r|\in [2, (n-s)/2]$,
 and $A_r$ induces a connected component in $G_{[n,m]}^{(-S)}$. 
 Moreover, if we choose  a pair with the smallest possible set $S$ 
 then  each $v\in S$ is linked to some vertex $u=u(v)\in A_r$ in $G_{[n,m]}$. 

Let $p_{s,r}$ denote the probability that 
$[s+r]\setminus [s]$ 
induces  a component in $G_{[n,m]}^{-[s]}$
and every $i\in [s]$ is linked to some vertex from $[s+r]\setminus [s]$ 
in $G_{[n,m]}$ (recall that vertex set of $G_{[n,m]}^{-[s]}$ 
is $[n]\setminus [s]$ and  vertex set of $G_{[n,m]}$ is $[n]$).
Let $p^*_{s,r}$ denote the probability that 
$G_{[n,m]}^{-[s]}$ has no edges connecting 
$[s+r]\setminus [s]$  and $[n]\setminus [s+r]$. Note  that 
$p_{s,r}\le p^*_{s,r}$. 

We have, by the union bound and symmetry
\begin{align*}
\PP\{{\cal B}'_k\}
\le 
\sum_{1\le s\le k-1}
\binom{n}{s}
\sum_{2\le r\le (n-s)/2}
\binom{n-s}{r}p_{s,r}
\le S_1+S_2,
\end{align*}
where
\begin{align*}
S_1
=
\sum_{1\le s\le k-1}
\binom{n}{s}
\sum_{2\le r\le n^{\beta}}
\binom{n-s}{r}p_{s,r},
\quad
S_2
=
\sum_{1\le s\le k-1}
\binom{n}{s}
\sum_{n^{\beta} < r\le (n-s)/2}
\binom{n-s}{r}p^*_{s,r}.
\end{align*}
We choose 
$\beta=\max\{1-\frac{\alpha}{2\kappa^*}, \frac{1}{2}\}$. Recall that $\kappa^*\ge \alpha$, see (\ref{2024-01-04}).
To prove the lemma we show that $S_i=o(1)$, for $i=1,2$.

{\it Proof of} $S_1=o(1)$.
Given $s$ and $r$ we evaluate the probability $p_{s,r}$.
Denote $S=[s]$, $U=[s+r]\setminus[s]$. Let $F=(S\cup U,{\cal E}_F)$
be a bipartite graph with the bipartition $S\cup U$ such that each $i\in S$ has degree one. Here ${\cal E}_F$ denotes the edge set of $F$.
Note that $|{\cal E}_F|=s$.

 Fix an integer $1\le h\le s$.
Let 
${\tilde {\cal E}}_{F}=({\cal E}^{(1)},\dots, {\cal E}^{(h)})$ be an ordered partition of the set ${\cal E}_{F}$ 
(every set ${\cal E}^{(i)}$ is nonempty, 
${\cal E}^{(i)}\cap{\cal E}^{(j)}=\emptyset$ 
for $i\not=j$, 
and $\cup_{i=1}^h{\cal E}^{(i)}={\cal E}_{F}$).
Let ${\tilde t}=(t_1,\dots, t_h)\in [m]^h$ be a 
vector with integer valued coordinates satisfying $t_1<\cdots<t_h$. 
Given a pair
$({\tilde {\cal E}}_{F}, {\tilde t})$,
 let ${\cal F}({\tilde {\cal E}}_{F}, {\tilde t})$ denote the event 
 that ${\cal E}^{(i)}\subset {\cal E}_{t_i}$ 
 for each $1\le i\le h$. The event means that the  edges of
$F$ are covered by  the edges of 
$G_{t_1},\dots G_{t_h}$ so 
that
for every $i$ the edge set ${\cal E}^{(i)}$ belongs to the edge set 
${\cal E}_{t_i}$ of $G_{t_i}$
 (we say that ${\cal E}^{(i)}$ receives label $t_i$).
Let $H_{\tilde t}=[m]\setminus\{t_1,\dots, t_h\}$  and let 
${\cal I}(S,U, H_{\tilde t})$ be the event that 
none of the graphs $G_j$, $j\in H_{\tilde t}$ has an edge connecting some 
$v\in U$ and $w\in {\cal V}\setminus (S\cup U)$.

\medskip
Let ${\mathbb F}_s$ denote the set of bipartite graphs with the bipartition $S\cup U$ where each $i\in S$ has degree one.
Note that
 $|{\mathbb F}_s|=r^s$.
We have, by the union bound and independence 
of $G_1,\dots, G_m$, that
\begin{equation}
\label{suma}
p_{s,r}
\le
\sum_{F\in{\mathbb F}_s}
\sum_{({\tilde {\cal E}}_{F},{\tilde t})}
\PP\{{\cal F}({\tilde {\cal E}}_{F}, {\tilde t})\}
\PP\{{\cal I}(S,U, H_{\tilde t})\}.
\end{equation}
Here the second sum runs over all possible pairs 
$({\tilde {\cal E}}_{F}, {\tilde t})$.

Let us estimate the double sum on the right.
We first estimate the probablity  
$\PP\{{\cal I}(S, U, H_{{\tilde t}})\}$ in (\ref{suma}).
By the independence of $G_1,\dots, G_m$, we obtain from (\ref{2024-01-10}) that
\begin{equation}
\label{2023-12-08+4}
\PP\{{\cal I}(S, U, H_{{\tilde t}})\}
=
{\hat q}_{r,s}^{m-h}
\le 
{\hat q}_{r,s}^{m-(k-1)}
\le
e^{-r\bigl(\ln n+(k-1)\ln\ln n-\lambda_{n,m,k}^*+C\bigr)}=:p'_r.
\end{equation}
Here we write $C$ instead of $C_{k-1}$ (see (\ref{2024-01-10})).
Combining (\ref{suma}) and (\ref{2023-12-08+4}) we have
\begin{equation}
\label{suma++++}
p_{s,r}
\le
p'_r
\sum_{F\in{\mathbb F}_s}
S_{F},
\qquad
{\text{where}}
\qquad
S_{F}
:=
\sum_{({\tilde {\cal E}}_{F},{\tilde t})}
\PP\{{\cal F}({\tilde {\cal E}}_{F}, {\tilde t})\}.
\end{equation}
Now we estimate the sum
$S_{F}$. In doing so we use the inequality shown below
\begin{equation}
\label{2024-01-08+2}
\PP\{{\cal F}({\tilde {\cal E}}_{F}, {\tilde t})\}
\le 
\eta^hn^{-h-s}, 
\qquad
{\text{where}}
\qquad
 \eta:=\max\{\eta_j, 2\le j\le k\}.
\end{equation}
For $1\le h\le  s$ and  
a vector $(e_1,\dots, e_h)$ with integer valued coordinates  satisfying $e_1+\cdots+e_h=s$ and $e_i\ge 1$ $\forall i$,
there are 
$\frac{s!}{e_1!\cdots e_h!}$
ordered
partitions 
${\tilde {\cal E}}_{F}=({\cal E}^{(1)},\dots,{\cal E}^{(h)})$ 
of ${\cal E}_{F}$
in $h$ non empty parts of sizes 
$|{\cal E}^{(1)}|=e_1,\dots, |{\cal E}^{(h)}|=e_h$. 
Therefore
we have
 \begin{align*}
 S_{F}
\le
\sum_{h=1}^{s}
\binom{m}{h}
\sum'_{e_1+\dots+e_h=s}
\frac{s!}{e_1!\cdots e_h!}
\frac{\eta^h}{n^{s+h}}
\le
\sum_{h=1}^{s}
\binom{m}{h}
\frac{\eta^h}{n^{s+h}}h^{s}.
\end{align*}
Here $\binom{m}{h}$ counts various labelings 
${\tilde t}=(t_1,\dots, t_h)$.  The sum $\sum'_{e_1+\dots+e_h=s+r-1}$ runs over the set of vectors
$(e_1,\dots, e_h)$ having integer valued coordinates $e_i\ge 1$ 
satisfying $e_ 1+\cdots+e_h=s$.  In the second inequality we used
\begin{displaymath}
\sum'_{e_1+\dots+e_h=s}\frac{s!}{e_1!\cdots e_h!}
\le
 (1+\dots+1)^{s}
 =
 h^{s}.
\end{displaymath}

Invoking inequalities $\binom{m}{h}\le \left(\frac{me}{h}\right)^h$ and 
$\frac{m}{n\ln n}
=\frac{1+o(1)}{\kappa^*}
\le 
\frac{2}{\kappa^*}$ (the last inequality holds for sufficiently large $n$ and $m$, see (\ref{2023-12-08+3})) we obtain
\begin{displaymath}
S_{F}
\le
\frac{1}{n^s}
\sum_{h=1}^{s}
\left(\frac{2 e \eta }{\kappa^*}\right)^h
h^{s-h}\ln^hn
\le
 \left(\frac{2 e \eta s }{\kappa^*}\right)^{s}
 \
 \frac{\ln ^{s}n}{n^s}.
\end{displaymath}
In the last step we used the fact that $\frac{2 e \eta }{\kappa^*}> 1$, see  (\ref{2024-01-04}), and inequality $\sum_{h=1}^sh^{s-h}\le s^s$.
Invoking this bound in (\ref{suma++++})  and using $|{\mathbb F}_s|\le r^s$ we obtain
\begin{equation}
\label{2024-01-10+4}
p_{s,r}
\le 
 p'_r
 \
 r^s
\left(\frac{2 e \eta s}{\kappa^*}\right)^{s}
 \frac{\ln ^{s}n}{n^s}.
 \end{equation}
Finally, using $\binom{n}{s}\le n^s$ and
$\binom{n-s}{r}
\le 
\binom{n}{r}
\le 
\left(\frac{ne}{r}\right)^r$, 
and invoking (\ref{2024-01-10+4})
we have
\begin{align*}
S_1
&
\le \sum_{1\le s\le k-1}
n^s
\sum_{2\le r\le n^{\beta}}
\left(\frac{ne}{r}\right)^r
p_{s,r}
\\
&
\le
\sum_{2\le r\le n^{\beta}}
\left(\frac{ne}{r}\right)^r
p'_r r^{k-1}
\sum_{1\le s\le k-1}
\left(\frac{2 e \eta s}{\kappa^*}\right)^{s}
\ln n^{s}
\\
&
\le 
C''
\sum_{2\le r\le n^{\beta}}
\left(\frac{ne}{r}\right)^r p'_r
r^{k-1}
\ln^{k-1}n.
\end{align*}
Here we upper bounded  $\sum_{1\le s\le k-1}
\left(\frac{2 e \eta s}{\kappa^*}\right)^{s}
\ln n^{s}
\le C'' \ln^{k-1}n$,
where $C''$ does not depend on $r,n,m$. 
Invoking expression (\ref{2023-12-08+4}) of $p'_r$ we obtain
\begin{displaymath}
S_1
\le
C''
\sum_{2\le r\le n^{\beta}}e^{r(\lambda_{n,m,k}^*+O(1))}
=
o(1)
\end{displaymath} 
since $\lambda_{n,m,k}^*\to-\infty$ as $n\to+\infty$.

{\it Proof of (\ref{2024-01-08+2}).}
Given a bipartite graph $F=(S\cup U,{\cal E}_F)$ and  (ordered) partition
${\tilde {\cal E}}_{F}=({\cal E}^{(1)},\dots,{\cal E}^{(h)})$, let
$V^{(i)}$ be the set of vertices incident to the edges from 
${\cal E}^{(i)}$. We denote $e_i=|{\cal E}^{(i)}|$ and
$v_i=|V^{(i)}|$.
For any labeling 
${\tilde t}=(t_{1},\dots, t_{h})$ that assigns labels 
$t_1,\dots, t_h$ to the sets
${\cal E}^{(1)},\dots,{\cal E}^{(r)}$ we have, by the independence of $G_1,\dots, G_m$,
\begin{displaymath}
\PP\{{\cal F}({\tilde {\cal E}}_{F}, {\tilde t})\}
=
\prod_{i=1}^h
\E
\left(
\frac{({\tilde X}_{t_i})_{v_i}}{(n)_{v_i}}Q_{t_i}^{e_i}
\right).
\end{displaymath}
We note that the fraction $\frac{({\tilde X}_{t_i})_{v_i}}{(n)_{v_i}}$ is 
a decreasing function of $v_i$ and it is maximized by 
$\frac{({\tilde X}_{t_i})_{e_i+1}}{(n)_{e_i+1}}$ since
 we always have $v_i\ge e_i+1$. Indeed,  
 given $|{\cal E}^{(i)}|=e_i$ the smallest possible set of vertices
$V^{(i)}$ 
corresponds to the configuration of edges of 
${\cal E}^{(i)}$ that creates a tree subgraph of $F$. Hence $v_i\ge e_i+1$. It follows that 
\begin{equation}
\label{Stirling}
\PP\{{\cal F}({\tilde {\cal E}}_{F}, {\tilde t})\}
\le
\prod_{i=1}^h
\E
\left(
\frac{({\tilde X}_{t_i})_{e_i+1}}{(n)_{e_i+1}}Q_{t_i}^{e_i}
\right).
\end{equation}
We evaluate the product in (\ref{Stirling}).
 Since $e_i\le s$ for each $1\le i\le h$, we upper bound each factor on the right of (\ref{Stirling}) as follows
\begin{equation}
\label{2024-01-08}
\E
\left(
\frac{({\tilde X}_{t_i})_{e_i+1}}{(n)_{e_i+1}}Q_{t_i}^{e_i}
\right)
\le 
\E
\left(
\frac{{\tilde X}_{t_i}^{e_i+1}}{n^{e_i+1}}Q_{t_i}^{e_i}
\right)
\le
\frac{\eta_{e_i+1}}{n^{e_i+1}}
\le 
\frac{\eta}{n^{e_i+1}}.
\end{equation}
Hence the product in  (\ref{Stirling}) is upper bounded by
$\eta^hn^{-h-s}$ (recall that $e_1+\cdots+e_h=s$). We obtain
(\ref{2024-01-08+2}).

{\it Proof of} $S_2=o(1)$.
Given $s$ and $r$ we evaluate the probability $p^*_{s,r}$
for $r$ satisfying $n^{\beta}\le r\le (n-s)/2$.
 We upper bound $q_{r,s}({\tilde X},Q)$ in the expectation below using
 (\ref{sr_1}),
\begin{align*} 
 {\hat q}_{r,s}
 &
 =
 \PP\{X\le 1\}
 +
 \E\left( q_{r,s}({\tilde X},Q) {\mathbb I}_{\{{\tilde X}\ge 2\}}\right)
 \\
 &
 \le 
 1-2\alpha r\frac{n-s-r}{(n-s)(n-s-1)}
 +
 \frac{(s+1)_2}{n}
  \\
  &
   \le 
  1-\frac{\alpha r}{n-s-1}+\frac{(s+1)_2}{n},
\end{align*}
In the last step we used $2\frac{n-s-r}{n-s}\ge 1$ for $r\le (n-s)/2$.
Furthermore, using $1+a\le e^a$ we estimate
\begin{equation}
\label{2024-01-11}
p^*_{s,r}
=
{\hat q}^m_{r,s}
\le
e^{
-m
\left(
\frac{\alpha r}{n-s-1}-\frac{(s+1)_2}{n}
\right)
}
\le
e^{-\alpha r\frac{m}{n}+(s+1)_2\frac{m}{n}}.
\end{equation}

Next,  we  upper bound the binomial coefficient for 
$n^{\beta}\le r\le n/2$
\begin{equation}
\label{2024-01-11+1}
\binom{n}{r}
\le 
\frac{n^n}{r^r(n-r)^{n-r}}
=
e^{n\ln \left(\frac{n}{n-r}\right)
+
r\ln \left(\frac{n-r}{r}\right)}
\le 
e^{2r+(1-\beta)r\ln n}.
\end{equation}
The last inequality is shown in the 
proof of Proposition 1 of \cite{Daumilas_Mindaugas2023}. We omit its proof  here.

Finally, combining (\ref{2024-01-11}), (\ref{2024-01-11+1}) and using 
$\binom{n}{s}\le n^s=e^{s\ln n}$ we obtain
\begin{align*}
\binom{n}{s}
\binom{n}{r}
p^*_{s,r}
&
\le
e^{2r+(1-\beta)r\ln n-\alpha r\frac{m}{n}+(s+1)_2\frac{m}{n}+s\ln n}
\\
&
=
e^{-r\left(\frac{\alpha}{\kappa^*}-(1-\beta)+O(r^{-1})\right)\ln n+O(r)}. 
\end{align*}
In the last step we used (\ref{2023-12-08+3}). 
We note that the  contribution of $s\ln n+(s+1)_2\frac{m}{n}$ is 
accounted in the term $O(r^{-1})$ in the brackets.
Hence 
\begin{displaymath}
S_2
\le 
\sum_{1\le s\le k-1}\sum_{n^{\beta}\le r\le (n-s)/2}
e^{-r\left(\frac{\alpha}{\kappa^*}-(1-\beta)+O(r^{-1})\right)\ln n+O(r)}.
\end{displaymath}
Our choice of $\beta\ge 1-\frac{\alpha}{2\kappa^*}$ yields $\frac{\alpha}{\kappa^*}-(1-\beta)\ge \frac{\alpha}{2\kappa^*}$.
Hence $S_2=o(1)$.
\end{proof}

\subsection{Proof of Lemma \ref{degree}}

We begin with an outline of the proof. We say that $u,v\in {\cal V}$ are linked by community 
$G_i=({\cal V}_i,{\cal E}_i)$ if $u,v\in {\cal V}_i$ and 
${\cal E}_i$ contains the  edge connecting $u$ and $v$ (denoted $\{u,v\}\in {\cal E}_i$).
 The idea of the proof is based on the observation that (in the range of 
$m,n$  considered) given a vertex of degree $k-1$  it is  likely that all of its neighbours are linked to this vertex by different communities.
Motivated by this observation, for $v\in {\cal V}$, we  introduce events
\begin{displaymath}
 {\cal D}(v)=\{S(v)=k-1, \, d_i(v)\in\{0,1\}\,  \forall i\in[m] \},
 \quad
 {\rm{where}}
 \quad
 S(v)=\sum_{i\in [m]}d_i(v).
\end{displaymath}
We say that the vertex $v$ has property ${\cal D}$ if the event ${\cal D}(v)$ occurs.
Note that a vertex with  property ${\cal D}$ has degree (in $G$) at most  $k-1$. Let 
\begin{displaymath}
N'_{k-1}=\sum_{v\in{\cal V}}{\bf I}_{{\cal D}(v)}
\end{displaymath}
denote the number of vertices  having 
property ${\cal D}$. 

We prove  Lemma \ref{degree} in two steps. We firstly 
approximate $N_{k-1}=(1+o_P(1))N'_{k-1}$ as $n\to+\infty$ and show that 
${\bf{E}} N'_{k-1}\to+\infty$ for $\lambda_{n,m,k}\to+\infty$, see  Lemma \ref{approx} below. Then we establish the  concentration  $N'_{k-1}=(1+o_P(1)){\bf{E}} N'_{k-1}$, see
Lemma \ref{concentration}. The proof of Lemma \ref{degree}  is given at the very end of the section.

In the proof below we use the following simple observations. We 
have
\begin{eqnarray}
\nonumber
{\bf{P}}\{d_1(1)=0\}
&
=
&
{\bf{P}}\{1\notin {\cal V}_1\}
+
{\bf{P}}\{1\in {\cal V}_1, d_1(v_1)=0\}
\\
\nonumber
&
=
&
1-{\bf{E}}\frac{{\tilde X}_1}{n}
+
{\bf{E}} \frac{{\tilde X}_1}{n}(1-Q_1)^{{\tilde X}_1-1}
=
1-\frac{\kappa}{n}.
\end{eqnarray}
We similarly show that 
\begin{displaymath}
{\bf{P}}\{d_1(1)=1\}
=
\frac{\tau}{n}.
\end{displaymath}
We observe that these identities imply $\kappa\ge \tau$.

\begin{lem}\label{approx} Let $k\ge 2$ be an integer.
Let $n\to+\infty$. Assume that $m=\Theta(n\ln n)$. Assume that 
 $\tau^*>0$ and
$\eta_2<\infty$. For $k\ge3$ we assume, in addition, that 
$\eta_3<\infty$.
Then  
\begin{eqnarray}
\label{2024-02-01}
&
{\bf{E}} N'_{k-1}
=
(1+o(1))
\frac{(\tau^*)^{k-1}}{(k-1)!}
e^{\lambda_{n,m,k}},
\\
\label{2024-02-02}
&
{\bf{E}}|N_{k-1}-N'_{k-1}| 
=
o\left(e^{\lambda_{n,m,k}}\right)
+o(1). 
\end{eqnarray}
\end{lem}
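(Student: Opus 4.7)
For (\ref{2024-02-01}) I would exploit the i.i.d.\ structure of $d_1(1),\dots,d_m(1)$ together with the marginals $\PP\{d_1(1)=0\}=1-\kappa/n$ and $\PP\{d_1(1)=1\}=\tau/n$ recalled in the excerpt. By vertex symmetry $\E N'_{k-1}=n\PP\{{\cal D}(1)\}$, and since ${\cal D}(1)$ demands exactly $k-1$ of the $d_i(1)$ to equal $1$ and the rest to equal $0$, the i.i.d.\ structure gives
\begin{displaymath}
\PP\{{\cal D}(1)\}=\binom{m}{k-1}\left(\frac{\tau}{n}\right)^{k-1}\left(1-\frac{\kappa}{n}\right)^{m-k+1}.
\end{displaymath}
Expanding $(1-\kappa/n)^{m-k+1}=(1+o(1))e^{-m\kappa/n}$ (valid since $m=\Theta(n\ln n)$ and $\kappa=O(1)$, so the Taylor remainder $m(\kappa/n)^2$ is $o(1)$), using $\binom{m}{k-1}\sim m^{k-1}/(k-1)!$, invoking Fact~2 to get $\tau^{k-1}=(1+o(1))(\tau^*)^{k-1}$ (from $\eta_2<\infty$ and $\tau^*>0$), and recognising $e^{\lambda_{n,m,k}}=n(m/n)^{k-1}e^{-m\kappa/n}$, I obtain (\ref{2024-02-01}).

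For (\ref{2024-02-02}) I would bound $|N_{k-1}-N'_{k-1}|$ by the count of vertices $v$ at which either (i) ${\cal D}(v)$ holds but $d(v)\le k-2$, or (ii) $d(v)=k-1$ but $\bar{\cal D}(v)$ holds. By symmetry
\begin{displaymath}
\E|N_{k-1}-N'_{k-1}|\le n\PP\{{\cal D}(1),\,d(1)\le k-2\}+n\PP\{d(1)=k-1,\,\bar{\cal D}(1)\}.
\end{displaymath}
Under ${\cal D}(1)$, event~(i) forces two of the $k-1$ unit-degree communities to pick the same neighbour of vertex~$1$; by symmetry the conditional collision probability is $O(1/n)$, so this term contributes $O(\PP\{{\cal D}(1)\})=O(e^{\lambda_{n,m,k}}/n)$, which is absorbed into $o(e^{\lambda_{n,m,k}})+o(1)$. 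For event~(ii) I use $d(1)\le S(1)$ to split into the \emph{Mult} case ($S(1)=k-1$ with some $d_i(1)\ge 2$) and the \emph{Share} case ($S(1)\ge k$, equivalently two distinct communities link vertex~$1$ to a common neighbour).

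The Mult case is handled by a union bound over the offending index $i$, the key tail estimate $\PP\{d_1(1)\ge 2\}\le\E\binom{d_1(1)}{2}\le\eta_3/(2n)$ (the only place $\eta_3<\infty$ is used, which is why it is assumed only for $k\ge 3$; for $k=2$ the event is empty), and the Poisson-type bound $\PP\{S^{(-i)}(1)\le k-3\}=O((\ln n)^{k-3}e^{-m\kappa/n})$: multiplying by the $nm$ prefactor this contributes $O(e^{\lambda_{n,m,k}}/\ln n)$. For Share I union-bound over the shared $v$ and the pair $i<j$, factorise $\PP\{v\in N_{G_i}(1)\cap N_{G_j}(1)\}=\left(\E[(\tilde X)_2 Q]/(n)_2\right)^2=O(\eta_2^2/n^4)$ by independence of $G_i,G_j$, and combine with the residual estimate $\PP\{d^{(-i,-j)}(1)\le k-1\}=O((m/n)^{k-1}e^{-m\kappa/n})$; the $n\binom{m}{2}n$ prefactor then gives $O((\ln n)^2/n\cdot e^{\lambda_{n,m,k}})$. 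Both contributions are $o(e^{\lambda_{n,m,k}})+o(1)$.

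The main technical obstacle is this residual estimate $\PP\{d^{(-i,-j)}(1)\le k-1\}$ in the Share case: it asks for a rare event of the very kind the lemma itself is about, so a direct recursion risks circularity (the natural inequality $d^{(-i,-j)}(1)\le d(1)$ goes the wrong way). The cleanest resolution is the decomposition $\PP\{d(1)\le k-1\}=\PP\{S(1)\le k-1\}+\PP\{d(1)\le k-1,\,S(1)\ge k\}$: the first term is a standard Binomial tail $O((m/n)^{k-1}e^{-m\kappa/n})$ via the coupling $S(1)\ge Y:=\sum_i{\mathbb I}_{\{d_i(1)\ge 1\}}\sim\Bin(m,\kappa/n)$, while the second is bounded by repeating the Share argument with a small prefactor $O(m^2/n^3)=O((\ln n)^2/n)=o(1)$, so a single self-bounding step closes the estimate.
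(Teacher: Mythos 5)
Your derivation of (\ref{2024-02-01}) coincides with the paper's: both reduce to $\E N'_{k-1}=n\PP\{{\cal D}(1)\}=n\binom{m}{k-1}(\tau/n)^{k-1}(1-\kappa/n)^{m-k+1}$ and apply the same asymptotics. For (\ref{2024-02-02}) you take a genuinely different route. The paper inserts the intermediate indicator ${\bf I}_{\{S(1)=k-1\}}$ and proves two separate estimates: the comparison with ${\bf I}_{{\cal D}(1)}$ is handled by the event ${\cal R}(1)$ (the value $k-1$ achieved by fewer than $k-1$ communities), bounded by $\sum_{h\le k-2}\binom{m}{h}(\kappa/n)^h(1-\kappa/n)^{m-h}$ using only $\PP\{d_i(1)\ge1\}=\kappa/n$ --- no third moment needed there; the comparison with ${\bf I}_{\{d(1)=k-1\}}$ is done by classifying the edge multiplicities $\gamma_r$ at the $k-1$ neighbours into the cases $\gamma_1\ge3$, a single $\gamma_r=2$, and two $\gamma$'s equal to $2$ (the terms $I_2',I_3',I_4'$), with $\eta_3$ entering through $\E\bigl(({\tilde X})_3Q^2\bigr)$. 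Crucially, the paper sidesteps the circularity you identify: in its single-collision case the conditioning $\gamma_r=1$ for $r\ge2$ forces the residual degree to equal $S'(1)=\sum_{j\ge3}d_j(1)$ exactly, a sum of i.i.d.\ terms whose point probabilities are computed directly, so no recursion is required. Your decomposition into the collision-under-${\cal D}$, Mult, and Share cases, the conditional-uniformity argument giving the $O(1/n)$ collision probability given ${\cal D}_K(1)$ (the single neighbours $u_i$, $i\in K$, are i.i.d.\ uniform on $[n]\setminus\{1\}$ --- worth stating explicitly), the moment bound $\PP\{d_1(1)\ge2\}\le\eta_3/(2n)$, and the one-step self-bounding estimate $\PP\{d^{(-i,-j)}(1)\le k-1\}\le O((m/n)^{k-1}e^{-m\kappa/n})+O(m^2/n^3)$ all check out and reproduce the orders $o(e^{\lambda_{n,m,k}})+O(m^4/n^5)$ matching the paper's (\ref{2024-02-02+9})--(\ref{2024-02-02+3}). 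Your version is arguably leaner (it avoids the $I_4'$ case analysis entirely); the paper's buys an exact identification of the residual variable that makes every bound a closed-form computation. The only presentational caveat is that your middle paragraph states the residual estimate without the $+O(m^2/n^3)$ correction that your final paragraph then supplies; when writing this up, carry that term through from the start.
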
 

{\it Proof of Lemma \ref{approx}.}
Note that random variables $d(v)$,
 $v\in {\cal V}$ are identically distributed and the 
 probabilities ${\bf{P}}\{{\cal D}(v)\}$, 
 $v\in {\cal V}$, are all equal.
 Hence
\begin{equation}
\label{2024-02-03}
{\bf{E}} N'_{k-1}
%=n{\bf{E}}{\bf I}_{{\cal D}(1)}
 = n{\bf{P}}\{{\cal D}(1)\}
 \quad
 {\rm{and}}
 \quad
{\bf{E}}|N_{k-1}-N'_{k-1}| 
\le 
n{\bf{E}}|{\bf I}_{\{d(1)={k-1}\}}-{\bf I}_{{\cal D}(1)}|.
\end{equation}
We obtain (\ref{2024-02-01}) 
from the first identity of (\ref{2024-02-03})
and 
the asymptotic formula 
\begin{equation}
\label{2024-02-02+1}
{\bf{P}}\{{\cal D}(1)\}
=
\bigl(1+o(1)\bigr)
\left(\frac{m}{n}\right)^{k-1}
\frac{(\tau^*)^{k-1}}{(k-1)!}
e^{-\frac{m}{n}\kappa}
=
(1+o(1))\frac{1}{n}\frac{(\tau^*)^{k-1}}{(k-1)!}e^{\lambda_{n,m,k}}.
\end{equation}
Similarly, we derive  (\ref{2024-02-02}) 
from the second inequality  of (\ref{2024-02-03})
and 
 inequalities 
\begin{eqnarray}
\label{2024-02-02+9}
&&
{\bf{E}}\left|{\bf I}_{\{d(1)=k-1\}}
-
{\bf I}_{\{S(1)=k-1\}}\right|
=
O\left(\frac{m^4}{n^6}\right)
+
O\left(\frac{m^{k}}{n^{k+1}}e^{-\frac{m}{n}\kappa}\right),
\\
\label{2024-02-02+3}
&&
{\bf{E}}\left|{\bf I}_{\{S(1)=k-1\}}
-
{\bf I}_{{\cal D}(1)}\right|
=
O\left(\left(\frac{m}{n}\right)^{k-2}e^{-\frac{m}{n}\kappa}\right).
\end{eqnarray}
We note that since $\tau^*>0$ bound  (\ref{2024-02-02+3}) combined with  the first relation of
 (\ref{2024-02-02+1})  implies
\begin{equation} 
\label{2024-02-03+10}
{\bf{P}}\{S(1)=k-1\}
=
(1+o(1))\left(\frac{m}{n}\right)^{k-1} 
\frac{(\tau^*)^{k-1}}{(k-1)!}
e^{-\frac{m}{n}\kappa}.
\end{equation}
In the remaining part of the proof we show  (\ref{2024-02-02+1}), (\ref{2024-02-02+9}),
(\ref{2024-02-02+3}). Before the proof we introduce some notation. 
For $K\subset [m]$ we denote $S_K=\sum_{i\in K}d_i(1)$ and 
introduce events
\begin{displaymath}
{\cal A}_K=\{d_i(1)\ge 1, \ \forall i\in K\},
\qquad
{\cal L}_K=
 \{S_K=k-1\}
\cap 
{\cal A}_K
\cap 
 \{S_{[m]\setminus K}=0\}.
\end{displaymath}

Proof of (\ref{2024-02-02+1}). Event 
${\cal D}(1)$ is the union of mutually disjoint events
\begin{displaymath}
 {\cal D}(1)
 =
\bigcup_{K:\, |K|=k-1}
{\cal L}_K.
\end{displaymath}
We have, by symmetry and independence and identical distribution   of  
$d_1(1)$, $\dots$, $d_m(1)$, that
 \begin{eqnarray}
\nonumber
{\bf{P}}\{{\cal D}(1)\}
&
=
&
{m\choose {k-1}}
%\binom{m}{k-1}
{\bf{P}}\{S_{\{1,\dots, k-1\}}=k-1, {\cal A}_{\{1,\dots, k-1\}}\}
{\bf{P}}\{ S_{\{k,\dots,m\}}=0\}
\\
\nonumber
&
=
&
{m\choose{k-1}}
%\binom{m}{k-1}
\left({\bf{P}}\{d_1(v_1)=1\}\right)^{k-1}
\left({\bf{P}}\{d_1(v_1)=0\}\right)^{m-k+1}
\\
\label{2024-02+02+2}
&
=
&
{m\choose{k-1}}
%\binom{m}{k-1}
\left(\frac{\tau}{n}\right)^{k-1}
\left(1-\frac{\kappa}{n}\right)^{m-k+1}.
\end{eqnarray}
Invoking   in (\ref{2024-02+02+2}) approximations 
${m\choose{k-1}}
%\binom{m}{k-1}
=\frac{m^{k-1}}{(k-1)!}(1+o(1))$, 
$\tau=\tau^*+o(1)$, 
see Fact 2, 
and the identity, where we use $\ln (1-x)=-x+O(x^2)$ for $x=o(1)$,
\begin{equation}
\label{2024-02-02+6}
\left(1-\frac{\kappa}{n}\right)^{m-k+1}
=
e^{(m-k+1)\ln\left(1-\frac{\kappa}{n}\right)}
=
e^{-\frac{m}{n}\kappa+O\left(\frac{1}{n}+\frac{m}{n^2}\right)}
\end{equation}
 we obtain the first relation of (\ref{2024-02-02+1}).
The second one follows from the  identity 
$\frac{m^{k-1}}{n^{k-2}}e^{-\frac{m}{n}\kappa}=e^{-\lambda_{n,m,k}}$.
 
% \end{document}
 
 Proof of (\ref{2024-02-02+3}).
 We represent 
$\{S(1)=k-1\}$ by unions of mutually disjoint events
\begin{displaymath}
 \{S(1)=k-1\}
 =
\bigcup_{K\subset[m]:\, 1\le |K|=k-1} 
{\cal L}_K
={\cal D}(1)\cup{\cal R}(1),
\end{displaymath}
where 

\begin{displaymath}
{\cal R}(1)
=
\bigcup_{h=1}^{k-2}\bigcup_{K\subset[m], |K|=h}{\cal L}_K.
\end{displaymath}
Hence 
\begin{equation}
\label{2024-02-02+7}
0
\le 
{\bf I}_{\{S(1)=k-1\}}-{\bf I}_{{\cal D}(1)}
\le 
{\bf I}_{{\cal R}(1)}.
\end{equation} 
Using symmetry and  the independence of $d_1(1),\dots, d_m(1)$ we evaluate the expectation
\begin{eqnarray}
\nonumber
{\bf{E}} {\bf I}_{{\cal R}(1)}
&
=
&
{\bf{P}}\{{\cal R}(1)\}
=
\sum_{h=1}^{k-2}
{m\choose h}
%\binom{m}{h}
{\bf{P}}\left\{S_{[h]}=k-1,{\cal A}_{[h]}\right\}
{\bf{P}}\left\{S_{[m]\setminus[h]}=0\right\}
\\
\label{2024-02-02+4}
&
\le
&
\sum_{h=1}^{k-2}
%\binom{m}{h}
{m\choose h}
\left(\frac{\kappa}{n}\right)^h
\left(1-\frac{\kappa}{n}\right)^{m-h}.
\end{eqnarray}
Here $[h]$ stands for the set $\{1,\dots, h\}$. In the last inequality we invoked  identity
\begin{displaymath}
{\bf{P}}\left\{S_{[m]\setminus[h]}=0\right\}
=
\prod_{i\in[m]\setminus[h]}{\bf{P}}\{d_i(1)=0\}
%=\left({\bf{P}}\{d_1(1)=0\}\right)^{m-h}
=\left(1-\frac{\kappa}{n}\right)^{m-h}
\end{displaymath}
and used  inequalities
\begin{displaymath}
{\bf{P}}\left\{S_{[h]}=k-1,{\cal A}_{[h]}\right\}
\le {\bf{P}}\{{\cal A}_{[h]}\}
=\prod_{i\in [h]}{\bf{P}}\{d_i(1)\ge 1\}
%=\left({\bf{P}}\{d_1(1)\ge 1\}\right)^h
=\left(\frac{\kappa}{n}\right)^h.
\end{displaymath}
Using 
$\left(1-\frac{\kappa}{n}\right)^{m-h}
\le 
e^{-\frac{\kappa}{n}(m-h)}
=
(1+o(1))e^{-\frac{m}{n}\kappa}$ and the fact that $n=o(m)$ 
we upper bound the quantity in (\ref{2024-02-02+4})
 by
$O\left(\left(\frac{m}{n}\right)^{k-2}
e^{-\frac{m}{n}\kappa}\right)$. 
Now (\ref{2024-02-02+3}) follows from (\ref{2024-02-02+7}),
(\ref{2024-02-02+4}).

Proof of  (\ref{2024-02-02+9}). 
Let ${\cal B}$ denote the event that vertex $1$ is adjacent to some $u\in{\cal V}$ in two communities simultaneously, 
\begin{displaymath}
{\cal B}=\bigl\{
\{1,u\}\in {\cal E}_i\cap {\cal E}_j
\quad
{\rm {for some}}
\quad
 u\in {\cal V}\setminus \{1\}
 \quad
 {\rm{ and some}}
 \quad
  i<j\bigr\}.
  \end{displaymath}
We observe that on the complement event ${\bar {\cal B}}$
we have $d(1)=S(1)$. Hence
\begin{displaymath}
d(1)
=
\bigl(
{\bf I}_{\cal B}
+
{\bf I}_{\bar {\cal B}}
\bigr)
d(1)
=
{\bf I}_{\cal B} d(1)
+
{\bf I}_{\bar {\cal B}} S(1)
=
S(1)-R_1,
\quad
R_1:={\bf I}_{\cal B} (S(1)-d(1)).
\end{displaymath}
Furhermore, since $R_1=0$ implies $d(1)=S(1)$ we have
\begin{displaymath}
\left|
{\bf I}_{\{d(1)=k-1\}}
-
{\bf I}_{\{S(1)=k-1\}}
\right|
\le 
{\bf I}_{\{d(1)=k-1\}}{\bf I}_{\{R_1\ge 1\}}
+
{\bf I}_{\{S(1)=k-1\}}{\bf I}_{\{R_1\ge 1\}}.
\end{displaymath}
Taking the expected values of both sides we obtain
\begin{eqnarray}
\nonumber
&&
{\bf{E}} \left|
{\bf I}_{\{d(1)=k-1\}}
-
{\bf I}_{\{S(1)=k-1\}}
\right|
\\
\nonumber
&&
\le 
{\bf{P}}\{d(1)=k-1,\, R_1\ge 1\}
+
{\bf{P}}\{S(1)=k-1,\, R_1\ge 1\}
=:
p_1+p_2.
\end{eqnarray}
To prove (\ref{2024-02-02+9}) we show that 
\begin{equation}
\label{2023-07-21+5}
p_i
=
O\left(\frac{m^4}{n^6}\right)
+
O\left(\frac{m^{k}}{n^{k+1}}e^{-\frac{m}{n}\kappa}\right),
\qquad
i=1,2.
\end{equation}
We only  prove (\ref{2023-07-21+5}) for $i=1$. For $i=2$ the proof 
is much the same.

Let $N(1)$ denote the set of neighbours of vertex $1$ in $G$. For $d(1)=k-1$ we have $|N(1)|=k-1$. Let 
$N^*(1)=(u_1^*,\dots, u_{k-1}^*)$ be a random permutation of elements of 
$N(1)$. Then $N(1)=\{u_1^*,\cdots, u_{k-1}^*\}$.
Let 
$\gamma_{r}=\sum_{i\in[m]}{\bf I}_{\{\{1,u_r^*\}\in{\cal E}_i\}}$ be the number of communities $G_i$ where $1$ and $u_r^*$ are adjacent.
For $r=1,2,\dots, k-1$
introduce events   
\begin{displaymath}
{\cal H}_{r}=\{\gamma_{r}\le 2\},
\ \
{\cal H}_{r,2}=\{\gamma_r=2\},
\ \
{\cal H}_{r*}= {\cal H}_{r,2}\cap
 \bigl\{
\gamma_j=1, \forall j\in[k-1]\setminus\{r\}\bigr\}.
\end{displaymath}
Using the fact that  events $\bigcap_{r=1}^{k-1} {\cal H}_r$
 and 
$\bigcup_{r=1}^{k-1} {\bar {\cal H}}_r$ are complement to each other, 
we write
\begin{eqnarray}
\nonumber
p_1
&
 =
&
 {\bf{P}}\left\{d(1)=k-1, R_1\ge 1, \bigcap_{r=1}^{k-1} {\cal H}_r\right\}
 +
 {\bf{P}}\left\{d(1)=k-1, R_1\ge 1, \bigcup_{r=1}^{k-1} {\bar {\cal H}}_r\right\}
\\
\nonumber
&
=:
& I_1+I_2.
  \end{eqnarray}
Now assume that event 
$\{R_1\ge 1\}\cap\left( \bigcap_{r=1}^{k-1} {\cal H}_r\right)$ occurs. Then  either there is a 
single $\gamma_r$ attaining value $2$ (while remaining $\gamma_j$, with $j\not=r$, attain 
value $1$) or there are (at least) two $\gamma$'s, say $\gamma_{s}$ and 
$\gamma_{t}$, attaining value $2$. Note that the second alternative only makes sense for $k\ge 3$.  Consequently,
\begin{eqnarray}
\nonumber
I_1
&
\le
&
{\bf{P}}
\left\{
d(1)=k-1,\bigcup_{r=1}^{k-1} {\cal H}_{r*}
\right\}
+
  {\bf{P}}
\left\{
d(1)=k-1,  \bigcup_{\{s,t\}\subset [k-1]}
   {\cal H}_{s,2}\cap {\cal H}_{t,2}
   \right\}
\\
\nonumber
&
=:
&
I_3+I_4,
\end{eqnarray}
where $I_4=0$ for $k=2$.
Let us upper bound $I_2,I_3,I_4$.
We have, by the union bound and symmetry,
\begin{eqnarray}
\nonumber
I_2
&
\le 
&
\sum_{r=1}^{k-1}
 {\bf{P}}
 \left\{
 d(1)=k-1, {\bar {\cal H}}_r\right\}
 =
 (k-1)I_2',
 \qquad
 I_2':={\bf{P}}\left\{d(1)=k-1, \gamma_1\ge 3\right\},
 \\
 \nonumber
I_3
&
=
&
\sum_{r=1}^{k-1}
{\bf{P}}
\left\{
d(1)=k-1, {\cal H}_{r*}
\right\}
=
(k-1)
I_3',
\qquad
I_3':=
{\bf{P}}
\left\{
d(1)=k-1, {\cal H}_{1*}
\right\},
\\
\nonumber
I_4
&
\le
&
\sum_{\{s,t\}\subset [k-1]}
 {\bf{P}}
\left\{
d(1)=k-1,
   {\cal H}_{s,2}\cap {\cal H}_{t,2}
   \right\}
   =
 {{k-1}\choose2}
 %  \binom{k-1}{2}
   I_4',
   \\
   \nonumber
   I_4'
   &
   :=
   &
   {\bf{P}}
\left\{
d(1)=k-1,
   {\cal H}_{1,2}\cap {\cal H}_{2,2}
   \right\}.
   \end{eqnarray}
To show (\ref{2023-07-21+5}) we upper bound  probabilities $I_2'$, $I_3'$ and $I_4'$.
The bound  (\ref{2023-07-21+5}) follows from  respective bounds 
(\ref{2024-02-05+5}), (\ref{2024-02-05+6}) and (\ref{2024-02-05+7}) shown below.

Let us estimate $I_2'$.
 The event $\{d(1)=k-1, \gamma_1\ge 3\}$ implies that for some 
 $u\in{\cal V}\setminus\{1\}$ 
 and some
 $\{j_1,j_2,j_3\}\subset [m]$ 
 we have
 $\{1,u\}\in{\cal E}_{j_1}\cap {\cal E}_{j_2}\cap {\cal E}_{j_3}$.
 We have, by the union bound,
 \begin{eqnarray}
 \label{2023-07-17+6}
 I'_2
&
 \le
 & (n-1)
 {m\choose 3}
 %\binom{m}{3}
 {\bf{P}}\bigl\{
 \{1,u\}\in{\cal E}_{j_1}\cap {\cal E}_{j_2}\cap {\cal E}_{j_3}
 \bigr\}
 \\
 \label{2024-02-05+5}
 &
 =
 &
 (n-1)
 {m\choose 3}
 %\binom{m}{3}
 \left({\bf{E}} \left(\frac{({\tilde X})_2}{(n)_2}Q\right)\right)^3
 %=
 %O\left(\frac{m^3}{n^5}{\tilde \eta}_2^3\right)
 =
 O\left(\frac{m^3}{n^5}\right)
 =
 o\left(\frac{m^4}{n^6}\right).
 \end{eqnarray}
% \end{document}
Let us estimate $I_3'$. 
  Recall that $d_1(1)$ and $d_2(1)$ denote the degrees of vertex \
  $1$ in $G_1$ and $G_2$ respectively. 
  Given $v\in{\cal V}\setminus\{1\}$, integers $s,t\ge 0$, 
  and $\{i,j\}\subset [m]$, 
 introduce events  ${\cal C}=\{ \gamma_r=1,\ 2\le r\le k-1\}$,
\begin{eqnarray}
\nonumber
{\cal B}_{i,j}(v)
&
=
&
\Bigl\{u_1^*=v,
\,
\{1,v\}\in{\cal E}_{i}\cap {\cal E}_{j},
\
\{1,v\}\notin{\cal E}_{h} \ \forall h\in [m]\setminus\{i,j\}\Bigr\}
\cap
{\cal C},
\\
\nonumber
{\cal B}^*_{s,t}(v)
 &
 =
 & 
 \{d(1)=k-1, {\cal B}_{1,2}(v), d_1(1)=1+s, d_2(1)=1+t\}
 \cap
{\cal C}.
\end{eqnarray}
Fix $u\in {\cal V}\setminus\{1\}$. We have, by symmetry,
 \begin{eqnarray}
 \label{2023-07-20+2}
  I_3'
  &=&
  \sum_{v\in{\cal V}\setminus\{1\}}\sum_{\{i,j\}\subset [m]}
 {\bf{P}}\{d(1)=k-1, {\cal B}_{i,j}(v)\}
\\
\nonumber
&=&
 (n-1)
 {m\choose 2}
 %\binom{m}{2}
  {\bf{P}}\{d(1)=k-1, {\cal B}_{1,2}(u)\}.
  \end{eqnarray}
 % \end{document}
 Let us evaluate  the probability on the right
  \begin{equation}
  \label{2023-07-20+1}
  {\bf{P}}\{d(1)=k-1, {\cal B}_{1,2}(u)\}
  =\sum_{(s,t):\,0\le s+t\le k-2}
  {\bf{P}}\{
  {\cal B}^*_{s,t}(u)\}.
  \end{equation}
  Consider the graph $G^{-\{1,2\}}$ with vertex set ${\cal V}$ and  edge set $\cup_{j=3}^m{\cal E}_j$.  Assume that  event
$
 {\cal B}^*_{s,t}(u)
 $ occurs. Then 
  the degree of vertex $1$ in $G^{-\{1,2\}}$  (denoted 
  $d^{-\{1,2\}}(1)$) equals $k-2-s-t$. Moreover, we have $d^{-\{1,2\}}(1)=\sum_{j=3}^md_j(1)$ (since $\gamma_{r}=1$ for $2\le r\le k-1$). Hence
  \begin{eqnarray}
  \nonumber
 {\bf{P}} \{
  {\cal B}^*_{s,t}(u)
 \}
&
 \le
 & 
 {\bf{P}}\left\{
 \{1,u\}\in {\cal E}_1\cap {\cal E}_2,
\sum_{j=3}^md_j(1)=k-2-s-t\right\}
 \\
 \label{2024-02-05}
 &
 =
 &
 {\bf{P}}\bigl\{
 \{1,u\}\in {\cal E}_1\cap {\cal E}_2\bigr\}
 {\bf{P}}\left\{\sum_{j=3}^md_j(1)=k-2-s-t\right\}.
 \end{eqnarray}
 Here we used the independence of the random sets 
 ${\cal E}_1,\dots, {\cal E}_{m}$.
 The first probability of (\ref{2024-02-05})
 \begin{equation}
  \label{2024-02-05+1}
  {\bf{P}}
  \bigl\{
 \{1,u\}\in {\cal E}_1\cap {\cal E}_2
 \bigr\}
 =
 \left(
 {\bf{E}}\left(\frac{({\tilde X})_2}{(n)_2}Q\right)\right)^2
 =
 O(n^{-4}).
 \end{equation}
 For $k-2-s-t\ge 1$ the second probability of (\ref{2024-02-05}) is evaluated  in the same way as the probability
 ${\bf{P}}\{S(1)=k-1\}$ in (\ref{2024-02-03+10}). Now we have 
 $S'(1):=\sum_{j=3}^md_j(1)$ instead of $S(1)=\sum_{j=1}^md_j(1)$ and we have $h=k-2-s-t$ instead of $k-1$.
 For $h=1,\dots, k-2$ the argument of the proof of (\ref{2024-02-03+10}) applies
 to
 ${\bf{P}}\{S'(1)=h\}$ and  we have 
 \begin{displaymath}
  {\bf{P}}\left\{S'(1)=h\right\}
  =
  \left(1+o(1)\right)
\left(\frac{m}{n}\right)^{h}
\frac{(\tau^*)^{h}}{h!}
e^{-\frac{m}{n}\kappa}.
\end{displaymath}
Furthermore, for $h=0$ we have 
\begin{displaymath}
{\bf{P}}\left\{S'(1)=0\right\}
=\left({\bf{P}}\{d_3(1)=0\}\right)^{m-2}
=
\left(1-\frac{\kappa}{n}\right)^{m-2}
=
(1+o(1))e^{-\frac{m}{n}\kappa}.
\end{displaymath}
Next, using the fact that $\max_{0\le h\le k-2}\frac{m^h}{n^h}=\frac{m^{k-2}}{n^{k-2}}$ we obtain the bound 
\begin{displaymath}
 {\bf{P}}\left\{S'(1)=h\right\}
  =
 O\left(
\left(\frac{m}{n}\right)^{k-2}
e^{-\frac{m}{n}\kappa}
\right),
\qquad 
h=0,1,\dots, k-2.
\end{displaymath}
Hence 
 the second probability of (\ref{2024-02-05}) is 
 $ O\left(
\left(\frac{m}{n}\right)^{k-2}
e^{-\frac{m}{n}\kappa}
\right)$.
Combining 
 this bound with  (\ref{2024-02-05+1})  
 we obtain
$
 {\bf{P}} \{
  {\cal B}^*_{s,t}(u)
 \}
 =
O\left(\frac{m^{k-2}}{n^{k+2}}
e^{-\frac{m}{n}\kappa}
\right)$.
Now 
 (\ref{2023-07-20+1}) implies the bound  
\begin{displaymath}
{\bf{P}}\{d(1)=k-1, {\cal B}_{1,2}(u)\}=O\left(\frac{m^{k-2}}{n^{k+2}}
e^{-\frac{m}{n}\kappa}
\right).
\end{displaymath}
Finally, 
(\ref{2023-07-20+2}) implies the bound
\begin{equation}
\label{2024-02-05+6}
I_3'
= 
O\left(
 \frac{m^{k}}{n^{k+1}}e^{-\frac{m}{n}\kappa}
 \right).
 \end{equation}

Let us estimate $I_4'$. 
Assume that  event  
$\{d(1)=k-1\}\cap {\cal H}_{1,2}\cap {\cal H}_{2,2}$ 
occurs. Then for some $\{u,v\}\subset {\cal V}\setminus\{1\}$ one of the following alternatives holds:

${\cal A}_1$: for some $i_1\not=i_2$  we have 
$\{1,u\},\{1,v\}\in {\cal E}_{i_1}\cap {\cal E}_{i_2}$;

${\cal A}_2$: for some $i_1\not=i_2\not=i_3$ we have 
$\{1,u\},\{1,v\}\in{\cal E}_{i_1}$ and $\{1,u\}\in{\cal E}_{i_2}$,
 and
$\{1,v\}\in{\cal E}_{i_3}$;

${\cal A}_3$: for some $i_1\not=i_2\not=i_3\not=i_4$ we have 
$\{1,u\}\in{\cal E}_{i_1}\cap{\cal E}_{i_2}$ 
and $\{1,v\}\in{\cal E}_{i_3}\cap{\cal E}_{i_4}$.

\noindent
Given $\{u,v\}$, we estimate the probabilities ${\bf{P}}\{{\cal A}_i\}$, 
$1\le i\le 3$, using  the union bound and symmetry,
\begin{eqnarray}
\nonumber
{\bf{P}}\{{\cal A}_1\}
&\le&
{m\choose 2}
%\binom{m}{2}
{\bf{P}}\{\{1,u\}, \{1,v\}\in {\cal E}_{i_1}\}
{\bf{P}}\{\{1,u\}, \{1,v\}\in {\cal E}_{i_2}\}
\\
\nonumber
&=&
{m\choose 2}
%\binom{m}{2}
\left(
{\bf{E}}
\left(\frac{({\tilde X})_3}{(n)_3}Q^2\right)
\right)^2,
\\
\nonumber
{\bf{P}}\{{\cal A}_2\}
&\le&
(m)_3
\left(
{\bf{E}}
\left(\frac{({\tilde X})_3}{(n)_3}Q^2\right)
\right)
\left(
{\bf{E}}
\left(\frac{({\tilde X})_2}{(n)_2}Q\right)
\right)^2,
\\
\nonumber
{\bf{P}}\{{\cal A}_3\}
&\le&
{m\choose 2}
%\binom{m}{2}
{{m-2}\choose 2}
%\binom{m-2}{2}
\left(
{\bf{E}}
\left(\frac{({\tilde X})_2}{(n)_2}Q\right)
\right)^4.
\end{eqnarray}
Furthermore, taking into account that there are $(n-1)_2$
ways to choose vertices $u\not=v$, we have
\begin{eqnarray}
\nonumber
I_4'
&\le& 
(n-1)_2\bigl(
{\bf{P}}\{{\cal A}_1\}
+
{\bf{P}}\{{\cal A}_2\}
+
{\bf{P}}\{{\cal A}_3\}
\bigr)
\\
\label{2024-02-05+7}
&=&
O
\left(
\frac{m^2}{n^4}+\frac{m^3}{n^5}+\frac{m^4}{n^6}
\right)
=
O\left(\frac{m^4}{n^6}\right).
\end{eqnarray}
The latter bound combined with (\ref{2023-07-17+6}) and  (\ref{2024-02-05+6})  yields  (\ref{2023-07-21+5}). $\qed$

\begin{lem}\label{concentration} Let $k\ge 2$ be an integer.
Let $n\to+\infty$. Assume that $m=\Theta(n\ln n)$. Assume that $\alpha>0$, $\tau^*>0$, $\eta_2<\infty$, 
$\mu'<\infty$.
%{\color{green}For $k\ge 3$ we assume, in addition, that $\eta_3<\infty$ AR TIKRAI REIKIA?}.
Assume that 
$\lambda_{n,m,k}\to+\infty$. 
 Then  
$N'_{k-1}=(1+o_P(1)){\bf{E}} N'_{k-1}$.
\end{lem}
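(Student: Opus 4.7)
The plan is to apply the second moment method. Since Lemma \ref{approx} combined with $\lambda_{n,m,k}\to+\infty$ and $\tau^*>0$ guarantees $\E N'_{k-1}\to+\infty$, Chebyshev's inequality reduces the assertion to $\Var(N'_{k-1})=o\bigl((\E N'_{k-1})^2\bigr)$. Splitting the variance by vertex exchangeability gives
\begin{equation*}
\Var(N'_{k-1})\le \E N'_{k-1}+n(n-1)\bigl(\PP\{{\cal D}(1)\cap{\cal D}(2)\}-\PP\{{\cal D}(1)\}^2\bigr),
\end{equation*}
in which the diagonal contribution $\E N'_{k-1}$ is automatically $o((\E N'_{k-1})^2)$, so the task reduces to the two-vertex asymptotic inequality $\PP\{{\cal D}(1)\cap{\cal D}(2)\}\le (1+o(1))\PP\{{\cal D}(1)\}^2$.

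The joint probability can be written explicitly by exploiting the independence of $G_1,\dots,G_m$: the pairs $(d_i(1),d_i(2))$ are i.i.d.\ across $i\in[m]$, and on ${\cal D}(1)\cap{\cal D}(2)$ each pair lies in $\{0,1\}^2$ with exactly $k-1$ of the $d_i(1)$'s and exactly $k-1$ of the $d_i(2)$'s equal to $1$. Writing $q_{ab}=\PP\{d_1(1)=a,d_1(2)=b\}$ and indexing by the overlap $c=\#\{i:d_i(1)=d_i(2)=1\}$, exchangeability across $i\in[m]$ and the symmetry $q_{10}=q_{01}$ yield
\begin{equation*}
\PP\{{\cal D}(1)\cap{\cal D}(2)\}=\sum_{c=0}^{k-1} T_c,
\qquad
T_c:=\binom{m}{c,\,k{-}1{-}c,\,k{-}1{-}c,\,m{-}2(k{-}1){+}c}\,q_{11}^{c}\,q_{10}^{2(k-1-c)}\,q_{00}^{m-2(k-1)+c}.
\end{equation*}
I would match the leading term $T_0$ to $\PP\{{\cal D}(1)\}^2=\binom{m}{k-1}^2(\tau/n)^{2(k-1)}(1-\kappa/n)^{2(m-k+1)}$ from (\ref{2024-02+02+2}) using (i) the marginal identity $\PP\{d_1(1)=1\}=\tau/n$ combined with $q_{10}=\tau/n-q_{11}-q_{1,\ge 2}$, and (ii) the inclusion--exclusion formula $1-q_{00}=2\kappa/n-\PP\{d_1(1)\ge 1,d_1(2)\ge 1\}$ (which uses $\PP\{d_1(1)=0\}=1-\kappa/n$), together with the estimates $q_{11}$, $q_{1,\ge 2}$ and $\PP\{d_1(1)\ge 1,d_1(2)\ge 1\}$ all being $O(1/n^2)$. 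The cumulative effect of these $O(1/n^2)$ corrections over $m=\Theta(n\ln n)$ communities is $O(\ln n/n)=o(1)$ in the exponent of $q_{00}^{m-2(k-1)}$, yielding $T_0=(1+o(1))\PP\{{\cal D}(1)\}^2$.

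The overlap terms $c\ge 1$ are handled by the ratio of consecutive summands,
\begin{equation*}
\frac{T_c}{T_{c-1}}=\frac{(k-c)^2}{c\,(m-2(k-1)+c)}\cdot\frac{q_{00}\,q_{11}}{q_{10}\,q_{01}}=O(1/m)=o(1),
\end{equation*}
since $q_{11}/(q_{10}q_{01})=O(1)$ (both factors being of order $1/n^2$). Hence the whole sum is asymptotically $T_0$ and the two-vertex bound follows. The main technical obstacle is the moment estimate $q_{11},q_{1,\ge 2},\PP\{d_1(1)\ge 1,d_1(2)\ge 1\}=O(1/n^2)$: one partitions the single community $G_1$ by the local configuration joining vertices $1$ and $2$ --- the edge $\{1,2\}\in{\cal E}_1$ (bounded by $O(\eta_2/n^2)$) and the sub-cases with a shared neighbour or with two distinct neighbours (each bounded by $O(\eta_3/n^2)$ when $k\ge 3$). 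For $k=2$ the constraint $d_1(v)=1$ forces ``exactly one neighbour'', permitting the replacement $(X-2)Q(1-Q)^{X-3}\le\min\{XQ,1\}$, so that $\mu'<\infty$ substitutes for $\eta_3$.
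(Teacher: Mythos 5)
Your overall strategy coincides with the paper's: second moment method via Chebyshev, decomposition of $\PP\{{\cal D}(1)\cap{\cal D}(2)\}$ by the overlap of the two index sets of degree-one communities, factorization into powers of $q_{11},q_{10},q_{00}$ (the paper's $q_1,q_3,q_2$ in (\ref{2023-07-25+1})--(\ref{2023-07-25+2})), matching the zero-overlap term to $\PP\{{\cal D}(1)\}^2$, and discarding the overlap terms. The variance split and the multinomial bookkeeping are exactly the paper's.

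The gap is in your two claimed moment estimates, both of which overreach what the hypotheses of this lemma allow. First, $\PP\{d_1(1)\ge 1,\,d_1(2)\ge 1\}=O(1/n^2)$ is not available: bounding it requires controlling $\E\bigl(({\tilde X})_2(1-(1-Q)^{{\tilde X}-1})\bigr)$, which under $\eta_2<\infty$ alone is only $O(n)$ (giving $O(1/n)$ for the probability) --- enough to ruin the exponentiation $q_{00}^{m}$ since $m\cdot O(1/n)=O(\ln n)$ in the exponent. This is precisely where $\mu'<\infty$ must be invoked: the paper's Fact 3 shows $\E\bigl(({\tilde X})_2(1-(1-Q)^{{\tilde X}-1})\bigr)=o(n^2/m)$, which is the exact (and weaker than $O(1)$) bound needed for $q_{00}^{m}\le(1+o(1))e^{-2\kappa m/n}$; your proposal only brings in $\mu'$ for an unrelated purpose at the very end. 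Second, $q_{11}=O(1/n^2)$ also fails in general: the sub-case where $1$ and $2$ are non-adjacent and have two distinct neighbours contributes $O(\E(({\tilde X})_4Q^2)/n^2)$, which is only $O(\eta_3/n)$ (and note that $\eta_3<\infty$ is not even among the hypotheses of this lemma, so you are using an unavailable assumption). Fortunately the ratio argument does not need $O(1/n^2)$: since $q_{10}\asymp\tau^*/n$ is bounded below by $c/n$, the crude bound $q_{11}\le n^{-1}\E({\tilde X}^2Q)=O(\eta_2/n)$ (obtained by sacrificing one factor of ${\tilde X}$ against $n$, as the paper does) already gives $T_c/T_{c-1}=O(n/m)=O(1/\ln n)=o(1)$. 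So your argument is repairable, but as written two of its key inequalities are unproved and one would fail; the repair is to replace both $O(1/n^2)$ claims by the weaker bounds $o(n^2/m)/(n)_2$ (via $\mu'$) and $O(1/n)$ respectively, which is what the paper does.
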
 
In the proof of Lemma \ref{concentration} we use the following fact.

{\bf Fact 3}.
%\label{Remark20204-02-08} 
Let $n\to+\infty$.  
For $m=\Theta(n\ln n)$  
condition $\mu'<\infty$ implies 
\begin{equation}
\label{2024-02-08}
{\bf{E}}
\left(
({\tilde X})_2
\left(
1-(1-Q)^{{\tilde X}-1}
\right)
\right)=o(n^2/m).
\end{equation}

{\it Proof of Fact 3.}
 Inequalities $1-(1-q)^{x}\le 1$ and $1-(1-q)^{x}\le qx$ imply inequality 
$(1-(1-q)^x\le \min\{1,qx\}$.
From the latter inequality we obtain
\begin{displaymath}
{\bf{E}}
\left(
({\tilde X})_2
\left(
1-(1-Q)^{{\tilde X}-1}
\right)
\right)
\le 
{\bf{E}} \bigl({\tilde X}^2\min\{1,{\tilde X} Q\}\bigr)=:I.
\end{displaymath}
We will show that $I=o(n/\ln n)$.
We split
\begin{displaymath}
I
=
{\bf{E}}\left({\tilde X}^2\min\{1,{\tilde X} Q\}{\bf I}_{\{X<\sqrt n\}}\right)
+
{\bf{E}}\left({\tilde X}^2\min\{1,{\tilde X} Q\}{\bf I}_{\{X\ge\sqrt n\}}\right)
=:I_1+I_2.
\end{displaymath}
Using $x/\ln (1+x)\le \sqrt n/ \ln(1+\sqrt n)$ for $x<\sqrt n$ and
$x/\ln(1+x)\le n/\ln (1+n)$ for $x\le n$ we upper bound
\begin{eqnarray}
\nonumber
I_1
&
\le 
&
\frac{\sqrt n}{\ln(1+\sqrt n)}
{\bf{E}}
\left(X\min\{1,XQ\}\ln(1+X){\bf I}_{\{X<\sqrt n\}}\right)
\le
\frac{\sqrt n}{\ln(1+\sqrt n)}\mu',
\\
\nonumber
I_2
&
\le
&
\frac{n}{\ln(1+n)}
{\bf{E}}
\left({\tilde X}\min\{1,{\tilde X} Q\}\ln(1+{\tilde X}){\bf I}_{\{X\ge \sqrt n\}}\right)
\le
\frac{n}{\ln(1+n)}I_2',
\end{eqnarray}
where 
\begin{displaymath}
I_2'={\bf{E}}
\left(X\min\{1,X Q\}\ln(1+X){\bf I}_{\{X\ge \sqrt n\}}\right).
\end{displaymath}
Our condition $\mu'<\infty$ implies $I_2'=o(1)$
as $n\to+\infty$.
Hence 
\begin{displaymath}
\qquad
\qquad
I
\le
 \frac{\sqrt n}{\ln(1+\sqrt n)}
 \mu'
 +
 \frac{n}{\ln(1+n)}
 I_2'
 =
 o(n/\ln n).
 \qquad
 \quad 
 \qed
\end{displaymath}

{\it Proof of Lemma \ref{concentration}}.
To show the concentration of $N'_{k-1}$ around the mean value 
${\bf{E}} N'_{k-1}$ we upper bound the variance of $N'_{k-1}$. To this aim we evaluate the covariances 
${\bf{Cov}} (I_{{\cal D}(v)},I_{{\cal D}(u)})$.

Given vertex $v\in {\cal V}$ 
and set
$K\subset [m]$ of size $|K|=k-1$  denote the  event 
\begin{displaymath}
{\cal D}_K(v)=\{d_i(v)=1 \
\forall i\in K
\
{\rm{and}}
\
d_j(v)=0\ \forall j\in [m]\setminus K\}.
\end{displaymath}
Note that for  $K\not= K'$   events 
${\cal D}_K(v), {\cal D}_{K'}(v)$ are mutually disjoint. Hence
\begin{displaymath}
{\bf I}_{{\cal D}(v)}
=
\sum_{K\subset [m], |K|=k-1}{\bf I}_{{\cal D}_K(v)}.
\end{displaymath}
For $h=0,1,\dots, k-1$ we denote $K(h)=\{h+1,\dots, h+k-1\}$.  Observe that   $K(0)$ and $K(h)$ share  
$|K(0)\cap K(h)|=k-1-h$ common elements. We have, by symmetry,
\begin{eqnarray}
\label{2023-07-25+2}
{\bf{E}} \bigl({\bf I}_{{\cal D}(v)}{\bf I}_{{\cal D}(u)}\bigr)
&
=
&
%\binom{m}{k-1}
{m\choose{k-1}}
{\bf{E}} 
\left(
{\bf I}_{{\cal D}_{K(0)}(v)}
\sum_{K\subset [m],\, |K|=k-1}{\bf I}_{{\cal D}_{K}(u)}
\right)
\\
\nonumber
&
=
&
%\binom{m}{k-1}
{m\choose{k-1}}
\sum_{h=0}^{k-1}
%\binom{k-1}{k-1-h}\binom{m-k+1}{h}
{{k-1}\choose{k-1-h}}{{m-k+1}\choose h}
{\bf{E}} \bigl({\bf I}_{{\cal D}_{K(0)}(v)}{\bf I}_{{\cal D}_{K(h)}(u)}\bigr).
\end{eqnarray}
Let us evaluate   
${\bf{E}} 
\bigl(
{\bf I}_{{\cal D}_{K(0)}(v)}
{\bf I}_{{\cal D}_{K(h)}(u)}\bigr)
=
{\bf{P}}\{{\cal D}_{K(0)}(v)\cap {\cal D}_{K(h)}(u)\}$.
To this aim we write event 
${\cal D}_{K(0)}(v)\cap {\cal D}_{K(h)}(u)$ in the form
\begin{displaymath}
{\cal X}_{K(0)\cap K(h)}\cap{\cal Y}_{[m]\setminus(K(0)\cup K(h))}
\cap {\cal Z}_{K(0)\setminus K(h)}\cap{\cal W}_{K(h)\setminus K(0)},
\end{displaymath}
where  for any $A\subset [m]$ we denote  events
\begin{eqnarray}
\nonumber
&&
{\cal X}_A=\{d_i(v)=d_i(u)=1 \ \forall i\in A\},
\qquad
{\cal Y}_A=\{d_i(v)=d_i(u)=0 \ \forall i\in A\},
\\
\nonumber
&&
{\cal Z}_A=\{d_i(v)=1, d_i(u)=0 \ \forall i\in A\},
\qquad
{\cal W}_A=\{d_i(v)=0, d_i(u)=1 \ \forall i\in A\}.
\end{eqnarray}
By the independence and identical distribution of $G_1,\dots, G_m$, we have
\begin{eqnarray}
\nonumber
{\bf{P}}\{{\cal D}_{K(0)}(v)\cap {\cal D}_{K(h)}(u)\}
&&=
{\bf{P}}\{{\cal X}_{K(0)\cap K(h)}\}
\times
{\bf{P}}\{{\cal Y}_{[m]\setminus(K(0)\cup K(h))}\}
\\
\nonumber
&&
\
\
\
\times 
{\bf{P}}\{ {\cal Z}_{K(0)\setminus K(h)}\}
\times
{\bf{P}}\{{\cal W}_{K(h)\setminus K(0)}\}
\\
\label{2023-07-25+1}
&&=
q_1^{k-1-h}q_2^{m-k-h+1}q_3^{2h}.
\end{eqnarray}
Here we denote
\begin{eqnarray}
\nonumber
&&
q_1={\bf{P}}\{d_1(v)=d_1(u)=1\},
\qquad
q_2={\bf{P}}\{d_1(v)=d_1(u)=0\},
\\
\nonumber
&&
q_3={\bf{P}}\{d_1(v)=1, d_1(u)=0\}.
\end{eqnarray}
We show below that
\begin{equation}
\label{2024-02-06}
q_1\le \frac{1}{n}{\bf{E}}\left({\tilde X}_1^2Q_1\right),
\qquad
q_3\le \frac{\tau}{n},
\qquad
q_2=1-2\frac{\kappa}{n}+\frac{\Delta_2}{(n)_2},
\end{equation}
where $\Delta_2$ satisfies  
$0\le \Delta_2\le
{\bf{E}}
\left(
({\tilde X}_1)_2
\left(
1-(1-Q_1)^{{\tilde X}_1-1}
\right)
\right)$.

Using (\ref{2024-02-06}) we  upper bound the product in (\ref{2023-07-25+1}).
Firstly, combining  $1+a\le e^a$, (\ref{2024-02-06}) and (\ref{2024-02-08})
we estimate
\begin{displaymath}
q_2^m
\le 
\left(e^{-2\frac{\kappa}{n}+\frac{\Delta_2}{(n)_2}}\right)^m
=
e^{-2\kappa\frac{m}{n}}\left(1+O\left(\Delta_2\frac{m}{(n)_2}\right)\right)
=
\left(1+o(1)\right)
e^{-2\kappa\frac{m}{n}}.
\end{displaymath}
This bound extends to $q_2^{m-t}$ for small $t$. In particular, for $0\le t\le 2k-2$, we have
\begin{eqnarray}
\nonumber
 q_2^{m-t}
 &
 =
 \left(q_2^m\right)^{1-\frac{t}{m}}
 \le 
  \left(1+o(1)\right)
 e^{-2\kappa\frac{m}{n}
 \left(1-\frac{t}{m}\right)}
\\
\nonumber
&
=(1+o(1))\left(1+O(n^{-1})\right)
e^{-2\kappa\frac{m}{n}}
\\
\label{2024-02-07+1}
&
=
(1+o(1))
e^{-2\kappa\frac{m}{n}}.
 \end{eqnarray} 
Now from (\ref{2024-02-06}), (\ref{2024-02-07+1})
and 
 (\ref{2023-07-25+1})
we obtain 
\begin{eqnarray}
\nonumber
{\bf{P}}\{{\cal D}_{K(0)}(v)\cap {\cal D}_{K(k-1)}(u)\}
&=&
q_2^{m-2k+2}q_3^{2k-2}
\le
\bigl(1+o(1))\bigr)
e^{-2\kappa\frac{m}{n}}\left(\frac{\tau}{n}\right)^{2k-2},
\\
\nonumber
{\bf{P}}\{{\cal D}_{K(0)}(v)\cap {\cal D}_{K(h)}(u)\}
&=&
O\bigl(n^{1-k-h}\bigr)
e^{-2\kappa\frac{m}{n}},
\qquad
h=0,1,\dots, k-2.
\end{eqnarray}
Invoking these bounds in  
(\ref{2023-07-25+2}) and using 
$\frac{m^h}{n^h}=o\left(\frac{m^{k-1}}{n^{k-1}}\right)$
for $0<h\le k-2$  we have that
\begin{eqnarray}
\nonumber
{\bf{E}} \bigl({\bf I}_{{\cal D}(v)}{\bf I}_{{\cal D}(u)}\bigr)
&\le&
\left(1+o(1)\right)
{m\choose{k-1}}
{{m-k+1}\choose {k-1}}
e^{-2\kappa\frac{m}{n}}\left(\frac{\tau}{n}\right)^{2k-2}
%
%
%\\
%&=&
%\frac{m^{2(k-1)}}{((k-1)!)^2}\frac{\tau^{2(k-1)}}{n^{2(k-1)}}e^{-2\kappa\frac{m}{n}}
\\
\label{2023-07-25+3}
&=&
\left(1+o(1)\right)\left({\bf{P}}\{{\cal D}(v)\}\right)^2.
\end{eqnarray}
In the last step we used $\tau=(1+o(1))\tau^*$ 
and  (\ref{2024-02-02+1}).
It follows from (\ref{2023-07-25+3}) that
\begin{eqnarray}
\nonumber
{\bf Var} N'_{k-1}
&=&
{\bf{E}} (N'_{k-1})^2-({\bf{E}} N'_{k-1})^2
\\
\nonumber
&
=
&
n{\bf{P}}\{{\cal D}(v)\}
+
(n)_2
{\bf{E}} \bigl({\bf I}_{{\cal D}(v)}{\bf I}_{{\cal D}(u)}\bigr)
-
\left(n{\bf{P}}\{{\cal D}(v)\}\right)^2
\\
\nonumber
&
\le
&
n{\bf{P}}\{{\cal D}(v)\}
+
o(1)
\left(n{\bf{P}}\{{\cal D}(v)\}\right)^2
\\
\nonumber
&
=
&
{\bf{E}} N'_{k-1} +o({\bf{E}} N'_{k-1})^2.
\end{eqnarray}
In the case where ${\bf{E}} N'_{k-1}\to+\infty$ we obtain 
${\bf Var} N'_{k-1}=o({\bf{E}} N'_{k-1})^2$ as $n\to+\infty$. Now Chebyshev's inequality shows for any $\varepsilon>0$
\begin{displaymath}
{\bf{P}}\{|N'_{k-1}-{\bf{E}} N'_{k-1}|>\varepsilon {\bf{E}} N'_{k-1}\}
\le 
\frac{{\bf Var} N'_{k-1}}{(\varepsilon {\bf{E}} N'_{k-1})^2}
=
\frac{o(1)}{\varepsilon^2}
=
o(1).
\end{displaymath}
Hence $N'_{k-1}=(1+o_P(1)){\bf{E}} N'_{k-1}$.

It remains to show  (\ref{2024-02-06}).
Let
${\bf{P}}_X$  denote
 the conditional probability given $X_1$.
Let us estimate $q_1$. 
Identities $d_1(v)=1$,  $d_1(u)=1$ imply   
$\{u,v\}\subset {\cal V}_1$. In particular, we have $q_1\le {\bf{P}}\{\{u,v\}\subset {\cal V}_1\}=(n)_2^{-1}{\bf{E}}({\tilde X}_1)_2$. Furthermore,  we have
\begin{eqnarray}
\nonumber
q_1
&
=
&
{\bf{E}}{\bf{P}}_X\{d_1(v)=d_1(v)=1,\{u,v\}\subset {\cal V}_1\}
\\
\nonumber
&
=
&
{\bf{E}}\left(
{\bf{P}}_X\{d_1(v)=d_1(v)=1|\,\{u,v\}\subset {\cal V}_1\}
{\bf{P}}_X\{\{u,v\}\subset {\cal V}_1\}
\right)
\\
\nonumber
&
=
&
{\bf{E}}\left(
\left(
Q_1(1-Q_1)^{2({\tilde X}_1-2)}
+
(1-Q_1)\bigl(({\tilde X}_1-2)Q_1(1-Q_1)^{{\tilde X}_1-3}\bigr)^2\right)\frac{({\tilde X}_1)_2}{(n)_2}\right).
\end{eqnarray}
Here the first term $Q_1(1-Q_1)^{2({\tilde X}_1-2)}$ refers to the event $\{u,v\}\in {\cal E}_1$. The second term refers to the complement event $\{u,v\}\not\in {\cal E}_1$.

Next for $q\in [0,1]$  and $x=2,3,\dots$ we apply  inequalities
$(x-2)q(1-q)^{x-3}\le 1$ and  $1-q\le 1$ and derive the inequality
\begin{displaymath}
q(1-q)^{2(x-2)}
+
(1-q)\bigl((x-2)q(1-q)^{x-3}\bigr)^2
\le
q+(x-2)q=(x-1)q.
\end{displaymath}
Invoking this inequality in the  formula for $q_1$ above we obtain
\begin{displaymath}
q_1\le (n)_2^{-1}{\bf{E}}\left(({\tilde X}_1-1)({\tilde X}_1)_2Q_1\right)
\le n^{-1}{\bf{E}}\left({\tilde X}_1^2Q_1\right).
\end{displaymath} 
%{\color{red}
%Combining the two upper bounds for $q_1$ we show that $q_1\le %n^{-2}{\bf{E}}\left({\tilde X}_1^2\min\{1,{\tilde X}_1Q_1\}\right)$.
%}

Let us evaluate $q_2$.
We split
\begin{equation}
\label{2024-02-07}
q_2
= 
q_{2,1}+q_{2,2}+q_{2,3}+q_{2,4},
\end{equation}
where
\begin{eqnarray}
\nonumber
q_{2,1}
&
=
&
{\bf{P}}\{\{u,v\}\cap {\cal V}_1=\emptyset\},
\qquad
q_{2,2}={\bf{P}}\{\{u,v\}\subset {\cal V}_1, d_1(v)=d_1(u)=0\},
\\
\nonumber
q_{2,3}
&
=
&
{\bf{P}}\{v\in{\cal V}_1, u \notin{\cal V}_1, d_1(v)=0\},
\qquad
q_{2,4}
=
{\bf{P}}\{v\notin{\cal V}_1, u \in{\cal V}_1, d_1(u)=0\},
\end{eqnarray}
and calculate the probabilities
\begin{eqnarray}
\nonumber
q_{2,1}
&
=
&
1-{\bf{P}}\bigl\{\{u\in {\cal V}_1\}\cup\{v\in{\cal V}_1\}\bigr\}
%\\
%&
=
1
-
{\bf{P}}\{u\in {\cal V}_1\}
-
{\bf{P}}\{v\in{\cal V}_1\}
+ 
{\bf{P}}\{\{u,v\}\subset{\cal V}_1\}
\\
\nonumber
&=&
1-2\frac{{\bf{E}} {\tilde X}_1}{n}+\frac{{\bf{E}} ({\tilde X}_1)_2}{(n)_2},
\\
\nonumber
q_{2,2}
&
=
&
{\bf{E}} {\bf{P}}_{X}\{\{u,v\}\subset {\cal V}_1, d_1(v)=d_1(u)=0\}
\\
&
=
&
{\bf{E}}\bigl(
{\bf{P}}_{X}\{ d_1(v)=d_1(u)=0|\{u,v\}\subset {\cal V}_1\} 
{\bf{P}}_{X}\{\{u,v\}\subset {\cal V}_1\}
\bigr)
\\
&
=
&
{\bf{E}}\left((1-Q_1)^{2{\tilde X}_1-3}\frac{({\tilde X}_1)_2}{(n)_2}\right),
\\
\nonumber
q_{2,3}
=
q_{2,4}
&
=
&
{\bf{E}}
{\bf{P}}_X\{v\not\in{\cal V}_1, u\in {\cal V}_1, d_1(u)=0\}
\\
\nonumber
&
=
&
{\bf{E}}
\left({\bf{P}}_X\{ d_1(u)=0|v\not\in{\cal V}_1, u\in {\cal V}_1\}
{\bf{P}}_X\{v\not\in{\cal V}_1, u\in {\cal V}_1\}
\right)
\\
\nonumber
&
=
&
{\bf{E}}\left((1-Q_1)^{{\tilde X}_1-1}\left(1-\frac{{\tilde X}_1}{n}\right)\frac{{\tilde X}_1}{n-1}\right)
\\
\nonumber
&
=
&
\frac{1}{n}{\bf{E}}\left({\tilde X}_1(1-Q_1)^{{\tilde X}_1-1}\right)
-\frac{1}{(n)_2}{\bf{E}}\left(({\tilde X}_1)_2(1-Q_1)^{{\tilde X}_1-1}\right).
\end{eqnarray}
Invoking these expressions for $q_{2,1}, q_{2,2}, q_{2,3}, q_{2,4}$ in (\ref{2024-02-07}) we obtain 
$q_2
=
1
-
2
\frac{\kappa}{n}+\frac{\Delta_2}{(n)_2}$, where
\begin{eqnarray}
\nonumber
\Delta_2
&&
:=
{\bf{E}}
\left(
({\tilde X}_1)_2
\left(
1-2(1-Q_1)^{{\tilde X}_1-1}+(1-Q_1)^{2{\tilde X}_1-3}
\right)
\right)
\\
\nonumber
&&
\
\le
{\bf{E}}
\left(
({\tilde X}_1)_2
\left(
1-(1-Q_1)^{{\tilde X}_1-1}
\right)
\right)
.
\end{eqnarray}

Let us evaluate $q_3$. We split $q_3=q_{3,1}+q_{3,2}$, where
\begin{displaymath}
q_{3,1}
=
{\bf{P}}\{d_1(v)=1, u\notin{\cal V}_1\},
\qquad
q_{3,2}
=
{\bf{P}}\{d_1(v)=1, u\in {\cal V}_1, d_1(u)=0\},
\end{displaymath}
and calculate the probabilities
\begin{eqnarray}
\nonumber
q_{3,1}
&
=
&
{\bf{E}}
{\bf{P}}_X\{d_1(v)=1, u\not\in{\cal V}_1, v\in {\cal V}_1\}
\\
\nonumber
&
=
&
{\bf{E}}
\left({\bf{P}}_X\{ d_1(v)=1|u\not\in{\cal V}_1, v\in {\cal V}_1\}
{\bf{P}}_X\{u\not\in{\cal V}_1, v\in {\cal V}_1\}
\right)
\\
\nonumber
&
=
&
{\bf{E}}\left(Q_1(1-Q_1)^{{\tilde X}_1-2}({\tilde X}_1-1)\left(1-\frac{{\tilde X}_1}{n}\right)\frac{{\tilde X}_1}{n-1}\right)
\\
\nonumber
&
=
&
{\bf{E}}\left(Q_1(1-Q_1)^{{\tilde X}_1-2}({\tilde X}_1-1)\left(\frac{{\tilde X}_1}{n}-\frac{({\tilde X}_1)_2}{(n)_2}\right)\right)
\end{eqnarray}
and
\begin{eqnarray}
\nonumber
q_{3,2}
&
=
&
{\bf{E}}
{\bf{P}}_X\{d_1(v)=1,d_1(u)=0,  \{u,v\}\subset {\cal V}_1\}
\\
\nonumber
&
=
&
{\bf{E}}
\left(
{\bf{P}}_X
\{
d_1(v)=1,d_1(u)=0|\{u,v\}\subset {\cal V}_1
\}
{\bf{P}}_X\{\{u,v\}\subset{\cal V}_1\}
\right)
\\
\nonumber
&
=
&
{\bf{E}}
\left(
Q_1(1-Q_1)^{2{\tilde X}_1-4}({\tilde X}_1-2)
\frac{({\tilde X}_1)_2}{(n)_2}
\right).
\end{eqnarray}
We obtain 
\begin{displaymath}
q_3
=
 {\bf{E}}\left(\frac{({\tilde X}_1)_2}{n} 
Q_1(1-Q_1)^{{\tilde X}_1-2}\right)
-
{\bf{E}}\left(\frac{({\tilde X}_1)_2}{(n)_2}\theta\right)
=
\frac{\tau}{n}
-
{\bf{E}}\left(\frac{({\tilde X}_1)_2}{(n)_2}\theta\right),
\end{displaymath}
where $0\le \theta\le 1$ stands for the difference of two  probabilities
\begin{eqnarray}
\nonumber
\qquad
\theta
&
=
&
Q_1(1-Q_1)^{{\tilde X}_1-2}({\tilde X}_1-1)
-
Q_1(1-Q_1)^{2{\tilde X}_1-4}({\tilde X}_1-2)
\\
\nonumber
&
=
&
{\bf{P}}_X\{d_1(v)=1\}
-
{\bf{P}}_X\{d_1(v)=1,d_1(u)=0\}
\\
\nonumber
&
\le
&
{\bf{P}}_X\{d_1(v)=1\}
=Q_1(1-Q_1)^{{\tilde X}_1-2}({\tilde X}_1-1).
\qquad
\quad
\qed
\end{eqnarray}

Now we are ready to prove Lemma \ref{degree}.

 {\it Proof of Lemma \ref{degree}}.
For $\lambda_{n,m,k}\to+\infty$ Lemmas  
\ref{approx}  
and
\ref{concentration} 
imply   ${\bf{E}} N'_{k-1}\to+\infty$ and 
\begin{displaymath}
N_{k-1}=(1+o_P(1))N'_{k-1}=(1+o_P(1)){\bf{E}} N'_{k-1}
\end{displaymath}
 Hence ${\bf{P}}\{N_{k-1}\ge 1\}\to 1$. 

Assume now that $\lambda_{n,m,k}\to- \infty$. Then 
$\lambda_{n,m,t}\to- \infty$ for $t=1,\dots, k$. 
For $h=1,\dots, k-1$ relations  (\ref{2024-02-01}), (\ref{2024-02-02}) of
Lemma \ref{approx} imply that
\begin{displaymath}
{\bf{E}} N'_{h}=o(1),
\qquad
{\bf{E}}|N_{h}-N'_{h}|=o(1).
\end{displaymath}
We have
$|{\bf{E}} N_h|\le{\bf{E}}|N_{h}-N'_{h}|+ |{\bf{E}} N'_{h}| =o(1)$
and
hence $N_h=o_P(1)$.

 For $h=0$ the bound $N_h=o_P(1)$ follows from the fact that $\lambda_{n,m,1}\to- \infty$ implies 
that $G_{[n,m]}$ is connected with high probability, see \cite{Daumilas_Mindaugas2023}.
 $\qed$


\begin{thebibliography}{24}




\bibitem{Daumilas_Mindaugas2023}
Ardickas, D., Bloznelis, M.:  
Connectivity threshold for superpositions of Bernoulli random graphs.
arxiv:2306.08113v2 (2023)


\bibitem{BergmanLeskela}
 Bergman, E.,  Leskelä, L.: 
Connectivity of random hypergraphs with a given hyperedge size
distribution. Discrete Applied Mathematics  \textbf{357}(15), 1--13 (2024)

\bibitem{Bloznelis2013AAP}
Bloznelis, M.:
Degree and clustering coefficient in sparse random intersection graphs, 
{\em The Annals of Applied Probability} 23, 1254--1289  (2013)

%\bibitem{Joona_Lasse_Mindaugas2021}
%Bloznelis, M.,  Karjalainen J., and  Leskel\"a, L.:
%Assortativity and bidegree distributions on Bernoulli random graph 
%superpositions. Probability in the Engineering and Informational Sciences 
%\textbf{36}(4), 1188--1213 (2021)

\bibitem{Joona_Lasse_Mindaugas2024}
Bloznelis, M.,  Karjalainen J., and  Leskel\"a, L.: 
Normal and stable approximation to subgraph counts in superpositions of 
Bernoulli random graphs.   Journal of Applied Probability \textbf{61}, 
401--419 (2024) 


\bibitem{Lasse_Mindaugas2019}
Bloznelis, M., Leskel\"a, L.:
Clustering and percolation on superpositions of Bernoulli random graphs.
 Random Structures $\&$ Algorithms \textbf{63}(2), 283--342 (2023)


\bibitem{Dominykas_Mindaugas_Rimantas2023}
Bloznelis, M., Marma, D., Vaicekauskas, R.:
Connectivity threshold for superpositions of Bernoulli random graphs.
II.
arXiv:2311.09317   (2023)


\bibitem{Bloznelis_Rybarczyk_2014}
Bloznelis, M., Rybarczyk, K.:  k-connectivity of uniform s-intersection graphs. Discrete Mathematics \textbf{333}, 94--100 (2014)



\bibitem{Bollobas_1981}
Bollob\'as, B.: Random graphs. In Combinatorics (ed. H. N. V. Temperley), London Mathematical Society Lecture Note Series, \textbf{52}, Cambridge
University Press, Cambridge 80–102 (1981)



\bibitem{Devroye_Fraiman_2014}
Devroye, L., Fraiman, N.: Connectivity of inhomogeneous random graphs. Random Structures Algorithms \textbf{45}, 408--420 (2014)

\bibitem{ErdosRenyi1959}
Erd\H os, P.,  R\'enyi, A.:
On random graphs. I.
 Publ. Math. Debrecen \textbf{6}, 290--297  (1959)


\bibitem{ErdosRenyi1961}
Erd\H os, P.,  R\'enyi, A.:
 On the strength of connectedness of a random graph,
Acta Mathematica Hungarica \textbf{12}, 261--267 (1961)

\bibitem{FriezeKaronski}
Frieze, A., Karo\'nski, M.: 
Introduction to Random Graphs. Cambridge University Press, Cambridge
 (2015). 


\bibitem{GodehardJaworskiRybarczyk2007}
Godehardt,  E., Jaworski, J., Rybarczyk, K.: 
Random intersection graphs and
classification. In:  Decker, R., Lenz,  H.J. (eds.) Advances in Data Analysis,  pp. 67--74.  Springer, Berlin, Heidelberg 
(2007)

\bibitem{Grohn_Karjalainen_Leskela_2024}
 Gröhn, T., Karjalainen, J., Leskelä, L.:
Clique and cycle frequencies in a sparse random graph model with overlapping communities.  Stohastic models
 (2024)
 
 
 
\bibitem{Hofstad2017}
 van der Hofstad, R.: Random graphs and complex networks. Vol. 1. Cambridge Series in Statistical and Probabilistic Mathematics.
  Cambridge University Press, Cambridge
 (2017)
 
 \bibitem{Vadon_Komjathy_Hofstad_2021}  
van der Hofstad, R., Komj\'{a}thy, J., Vadon, V.:  Random intersection graphs with communities.
Adv. in Appl. Probab. \textbf{53}, 1061--1089 (2021)
 
\bibitem{JansonLuczakRucinski2001}
 Janson, S., \L uczak, T., Ruci\'{n}ski, A.:
Random Graphs.
Wiley,  New York (2000)


\bibitem{Menger_1927}
Menger, K.: Zur allgemeinen Kurventheorie.
 Fund. Math. \textbf{10}, 96--115 (1927)

\bibitem{Penrose_1999}
Penrose, M. D.:
On  $k$-connectivity for a geometric random graph.
Random Structures Algorithms. \textbf{15},  145--164 (1999)
   
\bibitem{Petti_Vempala_2022}
Petti, S., Vempala, S.S.: Approximating sparse graphs: The random overlapping communities model.
 Random Structures $\&$ Algorithms \textbf{61}, 844--908 (2022)

   \bibitem{Shang_2023}
   Shang, Y.: On connectivity and robustness of random graphs with inhomogeneity.
J. Appl. Probab.\textbf{60}, 284--294  (2023)
  
  \bibitem{Wormald_1981}
  Wormald, N. C.: The asymptotic connectivity of labelled regular graphs.
Journal of Combinatorial Theory. Series B \textbf{31}, 156–167 (1981)

\bibitem{Yang_Leskovec2012}
Yang, J., Leskovec, J.:
Community-affiliation graph model for overlapping network community detection. In  IEEE 12th International Conference on Data Mining, 
pages 1170--1175. IEEE (2012)

\bibitem{Yang_Leskovec2014}
Yang, J., Leskovec, J.: Structure and overlaps of ground-truth communities in networks.  ACM Trans. Intell. Syst. Technol.  
\textbf{5}(2), 1--35  (2014)
   
   \bibitem{Zhao_yagan_Gligor_2017}
   Zhao, J., Yağan, O., Gligor, V.:
On connectivity and robustness in random intersection graphs.
IEEE Trans. Automat. Control \textbf{62}, 2121--2136 (2017)
   
   \end{thebibliography}
\end{document}